 \newtheorem{theorem}{Theorem}[section]
\newtheorem{lemma}[theorem]{Lemma}
\newenvironment{proof}{\paragraph{Proof:}}{\hfill$\square$}
 \numberwithin{equation}{section}
\DeclareMathOperator*\dif{\mathop{}\!\mathrm{d}}
\newcommand{\Rmnum}[1]{\expandafter\@slowromancap\romannumeral #1@}
\newcommand{\T}{~0\leq t\leq T}
\newcommand{\R}{\mathbb{R}}
\newcommand{\ds}{\,{\dif}s}
\newcommand{\dr}{\,{\dif}r}
\newcommand{\dd}{\,{\dif}}
\newcommand{\dwr}{\,{\dif}W_r}
\newcommand{\dws}{\,{\dif}W_s}
\newcommand{\dbs}{\,{\dif}\overleftarrow{B_s}}
\newcommand{\dbr}{\,{\dif}\overleftarrow{B_r}}
\newcommand{\dbi}{\Delta B_i}
\newcommand{\dwi}{\Delta W_i}
\newcommand{\tp}{t_{i+1}}
\newcommand{\ii}{\int_{t_i}^{t_{i+1}}}
\newcommand{\iit}{\int_{t}^{t_{i+1}}}
\newcommand{\iis}{\int_{s}^{t_{i+1}}}
\newcommand{\ya}{\tilde{Y}_i}
\newcommand{\yb}{Y_i}
\newcommand{\z}{\tilde{Z}_i}
\newcommand{\yatp}{Y_{i+1}(t_{i+1})}
\newcommand{\fs}{f(s,X_s,Y_s,Z_s)}
\newcommand{\fr}{f(r,X_r,Y_r,Z_r)}
\newcommand{\ffs}{f(s,X_s,\ya(s),\z(s))}
\newcommand{\ffr}{f(r,X_r,\ya(r),\z(r))}
\newcommand{\gs}{g(s,X_s,Y_s)}
\newcommand{\gr}{g(r,X_r,Y_r)}
\newcommand{\ggs}{g(s,X_s,\yb(s))}
\newcommand{\E}{E_{t_i}}
\newcommand{\f}{\frac}
\newcommand{\ola}{\overleftarrow}
\begin{document}

\title{Solving Backward Doubly Stochastic Differential Equations through Splitting Schemes}
\author{Feng Bao \thanks{ Department of Mathematics,  Florida State University, 1017 Academic Way, Tallahassee, FL 32304 ({\tt fbao@fsu.edu}). }
        \and Yanzhao Cao \thanks{Department of Mathematics, Auburn University, Auburn, AL 36849
         \ ({\tt yzc0009@auburn.edu}).}
         \and   He Zhang \thanks{Department of Mathematics, Auburn University, Auburn, AL 36849
         \ ({\tt hzz0077@auburn.edu}).} }
         \date{}
\maketitle

\begin{abstract}
A splitting scheme for  backward doubly stochastic differential equations  is proposed. The main idea is to decompose  a    backward doubly stochastic differential equation  into a   backward  stochastic differential equation and a stochastic differential equation. The backward stochastic differential equation  and  the  stochastic differential equation are then approximated by first order finite difference schemes, which results in a first order scheme for the  backward doubly stochastic differential equation.  Numerical experiments are conducted  to illustrate the convergence rate of the proposed scheme.
\end{abstract}

{\bf Key words}
Backward doubly stochastic differential equations, Splitting up scheme, Stochastic partial differential equations, Zakai equations, Nonlinear filtering problems

{\bf AMS classification}
  60H15, 65H35, 65C20, 93E11

\section{Introduction}
The aim of this paper is to introduce a splitting algorithm for the following backward doubly stochastic differential equation (BDSDE):
\begin{equation}\label{1b}
    Y_t  = \  \xi+\int_t^T \fs\ds-\int_t^T Z_s\dws +\int_t^T \gs\dbs,
\end{equation}
where $0\leq t\leq T$, $W:=\{W_t\}_{t\geq0}$, $B:=\{B_t\}_{t\geq0}$ are two independent Brownian motions and the stochastic process $X_t$ is defined by $X_t=X_0+W_t$, where $X_0$ is an initial random variable independent of $W$ and $B$.  The notation  ${\dif}\overleftarrow{B}$  stands for the backward It\^{o} integral (see \cite{Pardoux1987}), which is an It\^o integral with backward propagation direction.  The solution of the BDSDE \eqref{1b} is a pair of stochastic processes $(Y_t, Z_t)$. Here ``doubly" refers to the fact that the equation is driven by two independent Brownian motions. Without the $d\ola{B}_t$ integral, the BDSDE is reduced to a standard backward stochastic differential equation (BSDE), which has been extensively studied \cite{ma2002numberical, ma1994solving, pardoux1992backward, zhang2004, zhang2017backward}.

The theory of BDSDEs was  first studied  in \cite{pardoux1994backward} to give a probabilistic interpretation
for the solutions of the following class of semilinear stochastic partial differential equations (SPDEs)
\begin{equation}\label{spde}
   \begin{split}
        u(t,x)= &\Phi(x)+\int_t^T \big(\mathcal{L}u(s,x)+f(s,x,u(s,x),(\nabla u\sigma)(s,x))\big)\dd s \\
       & \quad +\int_t^T g(s,x,u(s,x))\dbs,~~~(t,x)\in [0,T]\times\R^d
   \end{split}
\end{equation}
through  the relation
 \begin{equation}\label{equi}
   Y_t=u(t,X_t),~~Z_t=\nabla u(t,X_t)\sigma(X_t).
 \end{equation}
The SPDE system \eqref{spde} provides a stochastic version of parabolic type PDEs which could decribe uncertainties  in modeling physical and engineering problems.  For example, in the case that $f$ is a linear function, the above SPDE solves the optimal filstering problem which aims to obtain the best estimate for the state of some stochastic dynamical system based on noisy partial observational data \cite{bao2014_JUQ}. The optimal filtering problem is the key mission in data assimilation and it has been widely used in target tracking, weather forecasting, image processing, parameter estimation, etc.. In  an optimal filtering problem, we need to obtain the conditional expectation for the target dynamical system given the observational information. It was proved (\cite{zakai1969optimal}) that the solution of the SPDE system \eqref{spde} (in the linear case) is the conditional probability density for the dynamical system in the optimal filtering problem, which is used to calculate the desired conditional expectation. In the connection of the equivalence relation \eqref{equi}, the BDSDE \eqref{1b}  also provides solution for the optimal filtering problem. In a recent study (\cite{bao2020_Jump, bao2019_Jump, bao2019, bao2011_BDSDE}), we established a direct link between BDSDEs and optimal filtering problems. The main advantage of solving application problems via BDSDEs instead of SPDEs is twofold. First, solving BDSDEs is mesh free, thus unstructured methods such as Monte Carlo methods and stochastic meshfree approximations can be applied \cite{Bao_2017}. Moreover, scalable parallel numerical algorithms for BSDEs and BDSDEs enable us to benefit from recent advances in high performance parallel computing and even the deep learning techniques (\cite{E2017, doi:10.1137/16M106371X,labart2013parallel}).  Second, while it is very difficult to construct higher order methods to solve SPDEs, high order schemes for BDSDEs are relatively easy to construct (\cite{ bao2015_First, bao2018_First, bao2016first}).

In this paper we introduce  a numerical scheme for the  BDSDE \cref{1b} using the splitting up method.
Our work is inspired by the studies of splitting up method for linear SPDEs. The application of the splitting up methods to linear SPDEs  was initiated by A. Bensoussan et al \cite{bensoussan1989approximation} where the SPDE is decomposed into a PDE and an SDE. Bensoussan's method was  further developed in \cite{bensoussan1990approximation,bensoussan1992approximation,legland1992splitting}. In particular, Gy{\"o}ngy and Krylov \cite{gyongy2003splitting}, proved  the convergence in the maximum norm.

To obtain a splitting up approximation for the BDSDE \cref{1b}, we decompose it into two equations, a BSDE which serves as a predictor or a  pre-solving procedure,  and an SDE which serves as an update procedure.  Both can be solved using highly efficient numerical schemes (\cite{GOBET20171171,kloeden2013numerical,JCM-31-221,zhang2017backward}). In this paper, we construct a first order scheme by using the Milstein scheme on the SDE and a simple first order scheme on the BSDE.  One of the advantages of our splitting up schemes,  in comparison with the existing numerical schemes for BDSDEs (\cite{Bachouch2016, bao2016first}), is that it avoids the solve of  $Z_t$ in \cref{1b}, which significantly reduces the computing cost. It's also worthy to point out that the conventional splitting up methods under the SPDEs framework are focused on the case that both $f$ and $g$ in \cref{spde} are linear functions while our methodology applies to more general nonlinear equations.
In addition, the significance our splitting up method is boosted by some recent work  of E, Han and Jentzen (\cite{E2017,Han2018}), where a deep learning technique is used to solve fairly high dimensional  BSDEs.  Such a method can be applied to solve the BSDE, which is the most computational expensive component in our  splitting up algorithm, thus can help solve high dimensional BDSDEs through our splitting up process.

The rest of this paper is organized as follows. In Section 2, we introduce some notations, assumptions  and concepts as well as some known theoretical results of BDSDEs.   In Section 3, we first present the splitting up method where the BDSDE is split into a BSDE and an SDE, and then prove the first order convergence. The numerical schemes with the corresponding numerical analysis are presented in Section 4, followed by three numerical examples in Section 5.

\section{Preliminaries}
Let $T>0$ be a fixed terminal time,  $(\Omega,\mathcal{F},P)$  a probability space, and   $W$ and $B$  two mutually independent Brownian motions on this space, with values in $\R^d$ and $\R^l$, respectively. For each $t\in[0,T]$, define two collections $\{\mathcal{F}_t\}_{0\leq t\leq T}$ and $\{\mathcal{G}_t\}_{0\leq t\leq T}$  by
\begin{equation*}
  \mathcal{F}_t:=\mathcal{F}_{0,t}^W\vee \mathcal{F}_{t,T}^B,~\text{and}~\mathcal{G}_{t}:=\mathcal{F}_{0,t}^W\vee \mathcal{F}_{0,T}^B,
\end{equation*}
where $\mathcal{F}_{s,t}^W$ and $\mathcal{F}_{s,t}^B$ are the
completion of $\sigma\{W_r-W_s;s\leq r\leq t\}$ and $\sigma\{B_r-B_s;s\leq r\leq t\}$ , respectively. Here $\{\mathcal{F}_t\}_{0\leq t\leq T}$ is neither increasing nor decreasing, while $\{\mathcal{G}_t\}_{0\leq t\leq T}$ is an increasing filtration. To simplify the presentation and make our analysis more readable, we assume throughout  the paper that $d=l=1$. The results obtained in this paper can be extended to multi-dimensional cases through similar procedures.

Denote by $\mathcal{M}^2([0,T];\R)$ the set of all $\R$-valued, $\mathcal{F}_t$-measurable processes $\{\varphi(t)\}_{0\leq t\leq T}$ such that $E\int_0^T |\varphi(t)|^2\,{\dif}t<\infty$, by $\mathcal{S}^2([0,T];\R)$ the set of all $\mathbb{R}$-valued, $\mathcal{F}_t$-measurable processes $\{\varphi(t)\}_{0\leq t\leq T}$ such that $E\Big[\sup\limits_{0\leq t\leq T} |\varphi(t)|^2\Big]<\infty$, and by $L^2(\Omega,\mathcal{F}_T,P;\R)$ the set of all $\mathcal{F}_T$-measurable random variable $\xi$ such that $E|\xi|^2<\infty$.

We assume that  $\Phi$, $f$ and $g$ satisfy the following regularity assumptions:


\textbf{(H1)}~$\Phi\in C^3(\R,\R)$, $f\in C^3([0,T]\times\R\times\R\times\R,\R)$, and $g\in C^3([0,T]\times\R\times\R,\R)$. Here $C^k(A,B)$ denotes the set of functions of class $C^k$ from $A$ to $B$ whose partial derivatives of order less than or equal to $k$ are bounded.

\textbf{(H2)}~$f\colon\Omega\times[0,T]\times\R\times\R\times\R\to\R$ and $g\colon\Omega\times[0,T]\times\R\times\R\rightarrow\R$ are jointly measurable. For any $x, y, z \in\R$,
\begin{equation}
  f(\cdot,x,y,z)\in\mathcal{M}^2([0,T];\R),~\text{and}~g(\cdot,x,y)\in\mathcal{M}^2([0,T];\R).
\end{equation}

\textbf{(H3)}\label{h3}~$f$ and $g$ satisfy the Lipschitz conditions. For all $\omega\in \Omega$, $t,s\in[0,T]$,~$x,\bar{x}\in\mathbb{R}$,~$y,\bar{y}\in\mathbb{R}$, $z,\bar{z}\in\mathbb{R}$, there exists a constant $L>0$ such that
\begin{equation}\label{33}
  \begin{split}
     & |f(t,x,y,z)-f(s,\bar{x},\bar{y},\bar{z})|^2\leq L(|t-s|+|x-\bar{x}|^2+|y-\bar{y}|^2+|z-\bar{z}|^2), \\
      & |g(t,x,y)-g(s,\bar{x},\bar{y})|^2\leq L(|t-s|+|x-\bar{x}|^2+|y-\bar{y}|^2).\\
  \end{split}
\end{equation}
 Moreover,
\begin{equation}\label{42}
  \sup\limits_{\T}\{|f(t,0,0,0)|^2+|g(t,0,0)|^2\}<L.
\end{equation}

The following theorem is a collection of well posedness and regularity results on BDSDEs which will be used throughout the rest of the paper.

\begin{theorem}\label{th1}
Let \textbf{(H1)-(H3)} hold.

(1) (Theorem 1.1 in \cite{pardoux1994backward}) For any $\Phi(X_T)\in L^2(\Omega,\mathcal{F}_T,P;\R)$, BDSDE \eqref{1b}
has a unique solution $(Y,Z)\in\mathcal{S}^2([0,T];\R)\times\mathcal{M}^2([0,T];\R)$.

(2) (Theorem 1.4 in \cite{pardoux1994backward}) There exists a positive constant $M$, independent of t, such that
\begin{equation*}
E\bigg[\sup\limits_{\T}|Y_t|^2 + \int_{0}^{T} |Z_t|^2 \dd t\bigg]\leq M.
\end{equation*}

(2) (Lemma 4.2 in \cite{bao2016first})~For $0\leq s\leq t\leq T$, there exists some positive constant $C$, independent of t, such that
\begin{equation*}
  \begin{split}
  &E[(Y_t-Y_s)^2] \leq C(t-s), \hspace{1em} | E[Y_t-Y_s] | \leq C(t-s).
  \end{split}
\end{equation*}

(3) (Lemma 2.3 in \cite{pardoux1994backward})~For any $t\leq s\leq T$, $(\nabla Y_s, \nabla Z_s)$ is the unique solution of the following variational equation
\begin{equation}\label{z}
\begin{split}
  &\nabla Y_s=\Phi'(X_T)\nabla X_T+\int_s^T\nabla \fr\dr- \int_s^T\nabla Z_s\dwr+\int_s^T \nabla \gr\dbr,
  \end{split}
\end{equation}
where $\nabla$ is the gradient operator with respect to $X_0$ ($X_0$ denoting the initial condition for $X_t$),
\begin{equation*}
  \begin{split}
  &\nabla \fs:=f_x(s,X_s,Y_s,Z_s)\nabla X_s
  +f_y(s,X_s,Y_s,Z_s)\nabla Y_s+f_z(s,X_s,Y_s,Z_s)\nabla Z_s,\\
  &\nabla \gs:=g_x(s,X_s,Y_s)\nabla X_s+g_y(s,X_s,Y_s)\nabla Y_s.
  \end{split}
\end{equation*}
Here we use subscripts to indicate partial differentiations.

(4) (Lemma 4.4 in \cite{Bachouch2016})~$\{Z_t\}_{0\leq t\leq T}$ has an a.s. continuous version which is given by
\begin{equation*}\label{90}
  Z_t=\nabla Y_t.
\end{equation*}
Furthermore, with the assumptions of the theorem and through similar estimation techniques for the variation equation for $Y_t$, we have
\begin{equation}
\begin{split}
  \quad E[(Z_t-Z_s)^2]\leq C(t-s), \quad | E[ Z_t - Z_s ] | \leq C(t-s),
  \end{split}
\end{equation}
for some positive constant $C$, independent of $t$.

\end{theorem}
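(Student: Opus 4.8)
The plan is to read the statement as a compilation and treat its parts accordingly. Items (1), (2), the H\"older continuity of $Y$, the variational equation \eqref{z}, and the identification $Z_t=\nabla Y_t$ are quoted from the indicated references, so for each of these I would only verify that hypotheses \textbf{(H1)}--\textbf{(H3)} supply the assumptions required in those sources. The one genuinely new ingredient is the pair of regularity estimates for $Z$ in item (4), flagged by the phrase ``through similar estimation techniques,'' and this is the part I would actually prove. The first reduction is immediate: since $Z_t=\nabla Y_t$, it suffices to show $E[(\nabla Y_t-\nabla Y_s)^2]\le C(t-s)$ and $|E[\nabla Y_t-\nabla Y_s]|\le C(t-s)$ for the solution $(\nabla Y,\nabla Z)$ of the linear BDSDE \eqref{z}.

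The second step, which I expect to be the main obstacle, is an a priori bound
\begin{equation*}
E\Big[\sup_{0\le t\le T}|\nabla Y_t|^2+\int_0^T|\nabla Z_r|^2\dr\Big]\le M
\end{equation*}
for the variational equation. I would obtain this exactly as the $(Y,Z)$ bound cited in item (2) is obtained: apply the It\^o formula to $|\nabla Y|^2$ in the backward-doubly-stochastic setting, use the It\^o isometries for the $W$- and $\ola{B}$-integrals, and close with Gr\"onwall's inequality. The boundedness of the first-order derivatives of $f$ and $g$ guaranteed by \textbf{(H1)} makes the coefficients $\nabla\fr$ and $\nabla\gr$ of linear growth in $(\nabla X,\nabla Y,\nabla Z)$ with bounded multipliers, and in this linear setting the flow derivative is constant ($\nabla X_t\equiv 1$), so the estimate indeed closes with a constant independent of $t$.

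Granted this bound, the two $Z$ estimates follow from the same computation that yields the $Y$ H\"older estimate. Writing, for $s\le t$,
\begin{equation*}
\nabla Y_t-\nabla Y_s=-\int_s^t\nabla\fr\dr+\int_s^t\nabla Z_r\dwr-\int_s^t\nabla\gr\dbr,
\end{equation*}
the mean-square estimate comes from squaring, taking expectations, and applying the It\^o isometry to both stochastic integrals, then invoking the $L^2$ bound on $\nabla Z$ and the $\sup$-bound on $\nabla Y$ together with the boundedness of $\nabla\fr$; this gives $E[(\nabla Y_t-\nabla Y_s)^2]\le C(t-s)$. For the first-moment estimate, both stochastic integrals have zero mean under the appropriate adaptedness of their integrands (the $W$-integral being a forward martingale and the $\ola{B}$-integral a backward martingale), so $E[\nabla Y_t-\nabla Y_s]=-E\big[\int_s^t\nabla\fr\dr\big]$, and the boundedness of $\nabla\fr$ yields $|E[\nabla Y_t-\nabla Y_s]|\le C(t-s)$. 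I would remark that running the identical pair of arguments on \eqref{1b} in place of \eqref{z} reproduces the $Y$ estimates of item (2), so both items rest on a single mechanism; the only extra work for $Z$ is the a priori bound for the variational system.
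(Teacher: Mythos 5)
Your reading of the statement is correct, and it matches the paper's treatment: the paper gives no proof of this theorem at all — items (1)--(3) and the identification $Z_t=\nabla Y_t$ are quoted from the cited references, and the only new content is the pair of $Z$-estimates in item (4), which the paper dispatches with exactly the phrase you latched onto (``similar estimation techniques for the variation equation for $Y_t$''). Your a priori bound for the variational system, the zero-mean argument for the two stochastic integrals, and the remark that the $Y$-estimates of item (2) follow from the same mechanism are all the intended route; in particular the pointwise bound $\sup_r E|Z_r|^2\le C$ needed to close the $Y$-estimate is correctly supplied in your plan by $Z_r=\nabla Y_r$ together with $E\big[\sup_t|\nabla Y_t|^2\big]\le M$.

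However, the $Z$-half of your argument has a genuine gap at the rate-producing step. After It\^o's isometry, the mean-square estimate leaves you with $E\int_s^t|\nabla Z_r|^2\,\mathrm{d}r$, and the a priori bound you establish only controls the time-integrated quantity $E\int_0^T|\nabla Z_r|^2\,\mathrm{d}r\le M$; by absolute continuity this gives $E[(\nabla Y_t-\nabla Y_s)^2]=o(1)$ as $t-s\to 0$, but not the claimed rate $C(t-s)$. The same defect appears in the drift term of your first-moment estimate: $\nabla f$ contains $f_z\,\nabla Z_r$, and Cauchy--Schwarz against the time-integrated bound only yields
\begin{equation*}
\int_s^t E|\nabla Z_r|\,\mathrm{d}r\le (t-s)^{1/2}\Big(E\int_s^t|\nabla Z_r|^2\,\mathrm{d}r\Big)^{1/2}=O\big((t-s)^{1/2}\big),
\end{equation*}
so your argument as written proves only $|E[Z_t-Z_s]|\le C(t-s)^{1/2}$. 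What is missing is a pointwise-in-time bound $\sup_r E|\nabla Z_r|^2\le C$, and this does not follow from the first variational equation alone: one must run your own mechanism one level deeper, differentiating \eqref{z} once more (a second variational equation) and applying the identification of item (4) to the variational system itself, so that $\nabla Z$ acquires an a.s. continuous version controlled by $E\big[\sup_t|\nabla^2 Y_t|^2\big]\le M'$. This is precisely where the $C^3$ regularity in \textbf{(H1)} is consumed — everything you wrote would go through under $C^2$, which is a signal that one derivative's worth of work is unaccounted for.
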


\vspace{0.1em}

\section{Splitting up method and convergence analysis}
In this section, we introduce the  splitting up framework for BDSDE \eqref{1b} and show that our splitting up system  provides a first order approximation for the original BDSDE.
\subsection{Splitting up method}
Let $0=t_0<t_1<\cdots<t_N=T$ be an uniform partition of $[0,T]$ with partition size $\Delta t:=\f{T}{N}$, where $N$ is a positive integer. Denote  $\Delta W_i:=W_{t_{i+1}}-W_{t_i}$ and $\Delta B_i:=B_{t_{i+1}}-B_{t_i}$. The approximation $\yb(t)$ to the solution $Y_t$ of BDSDE \eqref{1b} is defined recursively on each time  interval $[t_i,\tp), i=1, \cdots, N-1$ as follows. Set  $Y_{N}(T)=\Phi (X_T)$.  First define $ \ya(t), t_i\leq t<t_{i+1}$,  to be the solution of the BSDE:
\begin{equation}\label{BSDE_interval}
  \ya(t) =\yatp+\iit\ffs\ds-\iit\z(s)\dws, \ (BSDE)
\end{equation}
Then  $\yb(t)$ is defined as the solution of the SDE:
\begin{equation}\label{SDE_interval}
  \yb(t) =\ya(t)+\iit\ggs\dbs.  \ (SDE)
\end{equation}
In this way, the approximation of BDSDE \eqref{1b} on subinterval $[t_i,\tp)$ is split into two steps. In the first step, we  solve the  BSDE \eqref{BSDE_interval}. In the second step, we use the solution  $\ya(t_i)$ of the BSDE at time $t_i$ as the terminal value at time $\tp$ and solve the SDE \eqref{SDE_interval} on  $[t_i,\tp)$. These implicit equations are solved using iterative techniques. Here, the solution $(\ya,\z)$ of the BSDE \eqref{BSDE_interval} plays the role of the intermediate solution before we incorporate the $d\ola{B}$ integral. Hence $(\tilde{Y}_i(t), \tilde{Z}_i(t))$ is $\mathcal{F}_{0,t}^W\vee\mathcal{F}_{\tp,T}^B$ measurable for any $t\in[t_i,\tp)$. On the other hand, the solution $Y_i(t)$ of the SDE is $\mathcal{F}_{0,\tp}^W\vee\mathcal{F}_{t,T}^B$ measurable.  We let $\z$ be our approximation for the solution $Z_t$ for $t \in [t_i,\tp)$. It's worthy noting that $Y_i$ incorporates the $d\ola{B}$ integral as a solution for SDE, and $\tilde{Z}_i$ incorporates the $d\ola{B}$ integral only through the variation relationship with $Y_{i+1}(\tp)$ at temporal grid points. Moreover, letting $t\rightarrow \tp-0$, we have
\begin{equation*}
  \lim\limits_{t\rightarrow \tp-0}\yb(t)=\yatp.
\end{equation*}
Therefore the approximate process $\bar{Y}_t:=\sum\limits_{i=0}^{N-1}Y_i(t)\,1_{[t_i,\tp)}(t)+\Phi(X_T)\,1_{T}(t)$ has continuous trajectories.

\subsection{Convergence analysis}
We now turn to the convergence analysis for the proposed splitting up system \eqref{BSDE_interval}-\eqref{SDE_interval} in approximating the BDSDE \cref{1b}. We first state the main result of our analysis which  shows that our splitting up system provides a first order mean square approximation for solution $Y_t$ and half order mean square approximation for solution $Z_t$.
\vspace{0em}

\begin{theorem}\label{main}
Assume that \textbf{(H1)-(H3)} hold. Then for sufficiently large  $N$, there exists a positive constant $C$, independent of  $\Delta t$ and  $X_0$, such that
   \begin{equation}\label{Theorem:main}
   \max\limits_{1\leq i\leq N}\left(E\big[\E[Y_i(t_i)]-Y_{t_i}\big]^2 \right)\leq C\Delta t^2, \quad \max\limits_{1\leq i\leq N}\left(E\big[\tilde{Z}_i(t_i)-Z_{t_i}\big]^2 \right)\leq C\Delta t,
   \end{equation}
where $\E[\cdot]$ denotes the conditional expectation over the $\sigma$-algebra $\mathcal{G}_{0,t_i}=\mathcal{F}^W_{0,t_i}\vee \mathcal{F}^B_{0,T}$.

\end{theorem}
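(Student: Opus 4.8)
The plan is to run a backward recursion from $i=N$ to $i=1$ in the spirit of the classical consistency-plus-stability analysis of backward schemes, with the essential twist that the advertised first order for $Y$ is recovered only after the conditional expectation $\E$ has killed the leading martingale fluctuations. Concretely, I would first freeze one subinterval $[t_i,\tp]$ and compare the exact $Y_{t_i}$ given by \eqref{1b} with the split value $Y_i(t_i)$ produced by \eqref{BSDE_interval}--\eqref{SDE_interval} when both are fed the \emph{same} terminal datum at $\tp$; this isolates the one-step splitting (consistency) error. I would then quantify how a perturbation of the terminal datum propagates through one BSDE-then-SDE step (the stability estimate), and finally combine the two by a discrete Gronwall inequality. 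Solvability of the implicit equations \eqref{BSDE_interval}--\eqref{SDE_interval} and the contraction underlying stability both require $L\,\Delta t$ small, which is the source of the hypothesis that $N$ be sufficiently large.

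Subtracting the two representations on $[t_i,\tp]$ expresses $Y_i(t_i)-Y_{t_i}$ as a Lebesgue integral of $f$-differences, a forward stochastic integral $\int_{t_i}^{\tp}(\z(s)-Z_s)\dws$, and a backward stochastic integral of the $g$-differences taken along the split and the exact trajectories. The point of measuring the error through $\E$ (which averages over the increments of $W$ after $t_i$ while retaining all of $B$) is twofold. First, the forward $\dws$ integral has vanishing $\E$-conditional mean, because the integrand is adapted to $\mathcal F^W_{0,\cdot}\vee\mathcal F^B_{0,T}$ and $W$ remains a Brownian motion for this enlarged filtration thanks to $W\perp B$. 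Second, the dominant discrepancy between the split intermediate $\ya(s)$ and the true $Y_s$ inside the interval — namely the omitted backward integral $\int_s^{\tp} g\dbu$, of pathwise size $O(\sqrt{\Delta t})$ — enters the surviving $f$-term only through a cross object of the form $\int_{t_i}^{\tp} f_y(s)\big(\int_s^{\tp} g\,\dbu\big)\ds$. Expanding $f$ and $g$ by It\^o--Taylor around the grid values (licensed by the $C^3$ bounds in \textbf{(H1)}) and using the mean and mean-square regularity $|E[Y_t-Y_s]|\le C(t-s)$ and $E[(Y_t-Y_s)^2]\le C(t-s)$ of Theorem~\ref{th1}(2), I would show that, after $\E$, this cross term contributes either a genuine $O(\Delta t^2)$ bias or a mean-zero (in $B$) remainder of size $O(\Delta t^{3/2})$.

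The global bound then follows from two complementary accountings: the $O(\Delta t^2)$ bias parts add linearly over the $N=T/\Delta t$ steps to an $O(\Delta t)$ conditional-mean error, while the mean-zero $O(\Delta t^{3/2})$ parts, being to leading order (conditionally) uncorrelated across disjoint intervals because they are built from increments of $B$ and of the future of $W$ on disjoint blocks, add in quadrature to $\sqrt{N\,\Delta t^{3}}=O(\Delta t)$; squaring and taking the outer expectation yields the claimed $\max_i E[(\E[Y_i(t_i)]-Y_{t_i})^2]\le C\Delta t^2$ once the discrete Gronwall iteration absorbs the $(1+C\Delta t)$ stability factors. I expect the main obstacle to be precisely this local analysis of the cross term under $\E$: one must track how the forward $W$-martingale and the backward $B$-martingale interact through the nonlinearities $f$ and $g$, verify that the order-$\sqrt{\Delta t}$ contribution generated by decoupling the $f$-dynamics from the $g$-dynamics does not survive at first order, and control all It\^o--Taylor remainders uniformly in $i$ and in $X_0$. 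This is where \textbf{(H1)} and the fine estimates of Theorem~\ref{th1}(2)--(4) are indispensable.

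For the $Z$-component I would exploit $Z_t=\nabla Y_t$ from Theorem~\ref{th1}(4): the pair $(\nabla Y,\nabla Z)$ solves the linear variational BDSDE \eqref{z}, to which the same splitting and the same consistency/stability machinery apply. Because the statement compares $\z(t_i)$ with $Z_{t_i}$ \emph{without} a conditional expectation, the order-$\sqrt{\Delta t}$ martingale fluctuation is no longer smoothed away, so the per-step error is only $O(\Delta t)$ in mean square and the global estimate degrades by half an order to $\max_i E[(\z(t_i)-Z_{t_i})^2]\le C\Delta t$, in agreement with the second-moment regularity $E[(Z_t-Z_s)^2]\le C(t-s)$ of Theorem~\ref{th1}(4).
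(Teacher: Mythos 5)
Your overall skeleton --- one-step error equation on $[t_i,\tp]$, the conditional expectation $\E$ annihilating the forward integral $\ii(\z(s)-Z_s)\dws$, It\^o--Taylor expansion of $g$ along both the exact and split trajectories to handle the backward integral, and a discrete Gronwall closure under a smallness condition on $\Delta t$ --- is indeed the paper's route, and your explanation of why $\E$ buys first order for $Y$ but only half order for $Z$ is sound. The genuine gap is in your global accounting: you treat the $Y$-recursion and the $Z$-estimate as separable (bias adds linearly, mean-zero fluctuations add in quadrature; then ``for the $Z$-component I would \dots''), and that decoupling fails. Because $f$ is Lipschitz in $z$, the squared one-step recursion for $e_y^i:=\E[Y_i(t_i)]-Y_{t_i}$ unavoidably contains a term $C\,\Delta t\,E\big[(\E[e_z^{i+1}])^2\big]$ with $e_z^{i+1}:=\z(\tp-0)-Z_{\tp}$ (in the paper this enters through the cross term $I_4$, with constant $6|f_z|_\infty^2$). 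This term is neither an $O(\Delta t^2)$ bias nor conditionally mean-zero: the best bound one can ever prove for the $Z$-error is $O(\Delta t)$, and inserting it a priori gives $O(\Delta t^2)$ per step, hence only $E[(e_y^i)^2]=O(\Delta t)$ after summing $N=T/\Delta t$ steps --- half order for $Y$, not the claimed first order. Your quadrature argument does not rescue this, because $\E[e_z^{i+1}]$ is not an increment-type fluctuation orthogonal across blocks.

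The paper closes this loop with a genuinely coupled argument that your proposal lacks. Projecting the error equation against $\dwi$ gives $\Delta t\,\E[e_z^{i+1}]=\E[e_y^{i+1}\dwi]+\ii\E[\delta f^i(s)\dwi]\ds+R_z^{i+1}$ with $E[(R_z^{i+1})^2]=O(\Delta t^4)$, where the backward-integral contribution cancels thanks to the identity $\nabla g=g_x+g_yZ$ from \Cref{th1}. Squaring, dividing by $\Delta t$, and using the conditional-variance inequality
\begin{equation*}
\big(\E[e_y^{i+1}\dwi]\big)^2 \le \Delta t\Big(\E[(e_y^{i+1})^2]-\big(\E[e_y^{i+1}]\big)^2\Big)
\end{equation*}
yields a companion $Z$-recursion whose right-hand side carries exactly the variance gap $E\big[\E[(e_y^{i+1})^2]-(\E[e_y^{i+1}])^2\big]$. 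Added to the $Y$-recursion, whose leading term is $E[(\E[e_y^{i+1}])^2]$, this gap restores the full $E[(e_y^{i+1})^2]$ needed for telescoping, while all $\Delta t\,E[(\E[e_z^{i+1}])^2]$ terms are absorbed into the left-hand side after choosing $\epsilon_0,\epsilon_1,\epsilon_2$ and $\Delta t$ so that $\f{1}{1+\epsilon_2}-6|f_z|_\infty^2-\epsilon_1(1+\epsilon_0)-4L(1+\epsilon_0)\Delta t>0$; only then does discrete Gronwall deliver $\max_i E[(e_y^i)^2]\le C\Delta t^2$, after which the $Z$-bound follows by substituting back and invoking the time regularity of $\z$ and $Z$. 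A secondary flaw: your plan to estimate $Z$ by splitting the variational BDSDE \eqref{z} does not match the scheme, since $\z$ is defined as the martingale component of the split BSDE \eqref{BSDE_interval}, not as the solution of a split variational system; you would first have to prove the identification $\z=\nabla\ya$ for the approximate flow, which the paper avoids entirely via the $\dwi$-projection above.
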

To prove the theorem, we need several estimations concerning  the intermediate approximation $\ya$ and $\tilde{Z}_i$  given by \cref{BSDE_interval}.
\vspace{0.5em}

\begin{lemma}\label{th21}
Under the assumptions \textbf{(H1)-(H3)}, for any given interval $[t_i,\tp)$, there is a constant $C$, independent of $\Delta t$ and $X_0$, such that
   \begin{equation*}
   \begin{split}
      & \sup\limits_{t\in[t_i,t_{i+1})}E[(\ya(t)-\yatp)^2]\leq C\Delta t, ~~|\E[\ya(t)-\yatp]|\leq C\Delta t,\\
       & \sup\limits_{t\in[t_i,t_{i+1})}E[(\tilde{Z}_i(t)-\tilde{Z}_i(t_i))^2]\leq C\Delta t,~~|\E[\tilde{Z}_i(t)-\tilde{Z}_i(t_i)]|\leq C\Delta t.
   \end{split}
     \end{equation*}
\end{lemma}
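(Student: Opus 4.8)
The plan is to treat the BSDE \eqref{BSDE_interval} as an ordinary ($W$-driven) BSDE on the single interval $[t_i,t_{i+1})$ with terminal datum $Y_{i+1}(t_{i+1})$, and to extract all four increment bounds from its own structure together with two uniform a priori estimates. For the first $\tilde Y_i$ bound I would subtract the terminal value to get $\tilde Y_i(t)-Y_{i+1}(t_{i+1})=\int_t^{t_{i+1}} f(s,X_s,\tilde Y_i(s),\tilde Z_i(s))\,ds-\int_t^{t_{i+1}}\tilde Z_i(s)\,dW_s$, then square and take expectations. Cauchy--Schwarz bounds the drift by $\Delta t\,E[\int_t^{t_{i+1}}|f|^2\,ds]$ and the It\^o isometry turns the martingale term into $E[\int_t^{t_{i+1}}|\tilde Z_i(s)|^2\,ds]$. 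The point is that once one has the uniform pointwise bounds $\sup_s E[|f(s,\cdots)|^2]\le C$ and $\sup_s E[|\tilde Z_i(s)|^2]\le C$, the drift is $O(\Delta t^2)$ and the martingale is $O(\Delta t)$, so the sum is $C\Delta t$.

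For the conditional estimate I would apply $\E[\cdot]$ to the same difference. The crucial observation is that the stochastic integral drops out under $\mathcal G_{t_i}=\mathcal F^W_{0,t_i}\vee\mathcal F^B_{0,T}$: the integrand $\tilde Z_i(s)$ is $\mathcal F^W_{0,s}\vee\mathcal F^B_{0,T}$-measurable, and since $W$ and $B$ are independent, $W$ is a Brownian motion with respect to the enlarged filtration $\mathcal H_s:=\mathcal F^W_{0,s}\vee\mathcal F^B_{0,T}$. Hence $u\mapsto\int_t^u\tilde Z_i\,dW$ is an $\mathcal H$-martingale vanishing at $u=t$, and because $\mathcal G_{t_i}=\mathcal H_{t_i}\subseteq\mathcal H_t$ (as $t\ge t_i$) the tower property gives $\E[\int_t^{t_{i+1}}\tilde Z_i\,dW_s]=0$. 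What remains is $\E[\int_t^{t_{i+1}} f\,ds]$, which is bounded by $C\Delta t$ using the boundedness of $f$ from \textbf{(H1)}--\textbf{(H3)}.

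For the two $\tilde Z_i$ bounds I would use the representation $\tilde Z_i(t)=\nabla\tilde Y_i(t)$, the analog on $[t_i,t_{i+1})$ of part (4) of \cref{th1}, with $\nabla$ denoting differentiation in $X_0$. Differentiating \eqref{BSDE_interval} shows, as in \eqref{z}, that $(\nabla\tilde Y_i,\nabla\tilde Z_i)$ solves the variational BSDE with driver $\nabla f=f_x\nabla X_s+f_y\nabla\tilde Y_i+f_z\nabla\tilde Z_i$ (here $\nabla X_s\equiv 1$) and terminal $\nabla Y_{i+1}(t_{i+1})$. Subtracting its value at $t_i$ yields $\tilde Z_i(t)-\tilde Z_i(t_i)=-\int_{t_i}^t\nabla f\,ds+\int_{t_i}^t\nabla\tilde Z_i\,dW_s$, and I would then repeat the two arguments above verbatim on this increment: Cauchy--Schwarz plus the It\^o isometry give the $L^2$ bound $C\Delta t$, while the vanishing-martingale/tower argument gives $|\E[\tilde Z_i(t)-\tilde Z_i(t_i)]|=|\E[\int_{t_i}^t\nabla f\,ds]|\le C\Delta t$, provided $\sup_s E[|\nabla\tilde Z_i(s)|^2]\le C$.

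The real work, and the main obstacle, is justifying the uniform pointwise a priori bounds $\sup_s E[|\tilde Z_i(s)|^2]\le C$ and $\sup_s E[|\nabla\tilde Z_i(s)|^2]\le C$ with $C$ independent of $\Delta t$, $X_0$, and the interval index $i$. These do not follow from the elementary BSDE energy identity, which only controls $E[\int_{t_i}^{t_{i+1}}|\tilde Z_i|^2\,ds]=O(1)$ and would leave the martingale terms merely $O(1)$ rather than $O(\Delta t)$. Instead I would obtain them from the $\mathcal S^2$-regularity of the variational processes, namely $\tilde Z_i=\nabla\tilde Y_i$ and $\nabla\tilde Z_i=\nabla^2\tilde Y_i$ with $\nabla\tilde Y_i,\nabla^2\tilde Y_i\in\mathcal S^2([t_i,t_{i+1});\R)$ having uniformly bounded norms; this is exactly where the $C^3$ boundedness of \textbf{(H1)} and the uniform $L^2$-bounds on the terminal data $Y_{i+1}(t_{i+1})$ and $\nabla Y_{i+1}(t_{i+1})$ (propagated across subintervals from $\Phi(X_T)$ via \cref{th1}) enter. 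Verifying that these bounds stay genuinely uniform over all $N=T/\Delta t$ subintervals, rather than accumulating a factor growing with $N$, is the delicate point that makes the constants independent of $\Delta t$.
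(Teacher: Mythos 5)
Your proposal is correct in substance, but it takes a much longer road than the paper, whose entire proof is a single line: view the sub-interval BSDE \cref{BSDE_interval} as the BDSDE \cref{1b} in the special case $g\equiv 0$, with terminal datum $Y_{i+1}(t_{i+1})$, and read off all four bounds from the increment estimates collected in \Cref{th1} (parts (2) and (4), quoted from \cite{bao2016first} and \cite{Bachouch2016}). What you do is, in effect, reprove those cited results in this special case: your energy estimate hinging on the pointwise bound $\sup_s E[|\tilde{Z}_i(s)|^2]\leq C$ (you are right that the interval-wise energy identity alone only gives $E[\int_{t_i}^{t_{i+1}}|\tilde{Z}_i|^2\,{\dif}s]=O(1)$), your observation that the ${\dif}W$ integral vanishes under $E_{t_i}[\cdot]$ because $W$ remains a Brownian motion with respect to $\mathcal{F}^W_{0,s}\vee\mathcal{F}^B_{0,T}$ by independence of $W$ and $B$, and your use of the representation $\tilde{Z}_i=\nabla \tilde{Y}_i$ together with the variational BSDE to handle the $\tilde{Z}_i$-increments are precisely the ingredients underlying \Cref{th1}. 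So the two routes coincide at the level of ideas; yours is self-contained where the paper's is a citation. One genuine point your route surfaces, and which you correctly identify as the crux, is that \Cref{th1} is stated for terminal data of the form $\Phi(X_T)$, whereas each sub-BSDE here has the random terminal datum $Y_{i+1}(t_{i+1})$ produced by the splitting recursion; hence the $\mathcal{S}^2$ bounds on $\nabla\tilde{Y}_i$ (and $\nabla^2\tilde{Y}_i$) must be uniform over all $N$ subintervals. You flag this but leave it as a sketch; the standard resolution is the one you hint at, namely a per-interval stability estimate of the form $(1+C\Delta t)$ for the relevant second moments through both the BSDE and SDE steps, compounding to at most $e^{CT}$ --- this is the only detail you do not carry out, and it is equally implicit in the paper's one-line proof. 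A small slip: unless one reads \textbf{(H1)} as asserting boundedness of $f$ itself, your appeal to ``boundedness of $f$'' in the conditional estimate should instead go through the Lipschitz/linear-growth bounds of \textbf{(H3)} combined with the same (conditional) moment bounds you already invoke elsewhere.
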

\begin{proof}
The estimations in the lemma follow directly from \Cref{th1} with the special case $g\equiv0$.
\end{proof}

\vspace{0.5em}



\begin{lemma}\label{prop_36}
Under the assumptions \textbf{(H1)-(H3)}, for any given interval $[t_i,\tp)$, there is a constant $C$, independent of $\Delta t$ and $X_0$,  such that
   \begin{equation*}
   \sup\limits_{t\in[t_i,t_{i+1})}\big(E[ (\ya(t)-Y_{t})^2]\big)\leq CE[(E_{t_{i+1}}[\yatp]-Y_{\tp})^2]+C\Delta t.
   \end{equation*}
\end{lemma}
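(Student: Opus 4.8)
The plan is to set up a backward-stochastic difference equation for $\eta_t:=\tilde{Y}_i(t)-Y_t$ on $[t_i,t_{i+1})$ and then run an $L^2$ energy estimate, the one genuinely new feature being the backward It\^o forcing that the intermediate BSDE \eqref{BSDE_interval} drops relative to the full BDSDE \eqref{1b}. Subtracting the integral form of \eqref{1b} on $[t,t_{i+1}]$ from \eqref{BSDE_interval} and using $\lim_{t\to t_{i+1}^-}\tilde{Y}_i(t)=\yatp$, I would obtain
\begin{equation*}
\eta_t=\big(\yatp-Y_{t_{i+1}}\big)+\int_t^{t_{i+1}}\Delta f_s\,\ds-\int_t^{t_{i+1}}\Delta Z_s\,\dws-\int_t^{t_{i+1}}\gs\,\dbs,
\end{equation*}
where $\Delta f_s:=\ffs-\fs$ and $\Delta Z_s:=\z(s)-Z_s$. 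Thus $\eta$ solves a BSDE whose terminal datum is $\yatp-Y_{t_{i+1}}$, whose driver increment is Lipschitz, and which carries exactly one extra source term, the omitted backward integral $-\int_t^{t_{i+1}}\gs\,\dbs$.

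Next I would apply the BDSDE It\^o formula (Pardoux--Peng) to $|\eta_t|^2$ on $[t,t_{i+1}]$ and take expectations. Independence of $W$ and $B$ makes both stochastic integrals mean-zero, while the backward integral produces a quadratic-variation term $E\int_t^{t_{i+1}}|\gs|^2\,\ds$; this yields
\begin{equation*}
E|\eta_t|^2+E\int_t^{t_{i+1}}|\Delta Z_s|^2\,\ds=E\big[(\yatp-Y_{t_{i+1}})^2\big]+2E\int_t^{t_{i+1}}\eta_s\Delta f_s\,\ds+E\int_t^{t_{i+1}}|\gs|^2\,\ds.
\end{equation*}
By \textbf{(H3)} one has $|\Delta f_s|\le\sqrt{L}\,(|\eta_s|+|\Delta Z_s|)$, so Young's inequality absorbs $\tfrac12E\int|\Delta Z_s|^2\,\ds$ into the left side and leaves $CE\int_t^{t_{i+1}}|\eta_s|^2\,\ds$; the linear growth of $g$ together with the moment bounds of \Cref{th1} gives $E\int_t^{t_{i+1}}|\gs|^2\,\ds\le C\Delta t$. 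Gr\"onwall's inequality then produces
\begin{equation*}
\sup_{t\in[t_i,t_{i+1})}E|\eta_t|^2\le C\,E\big[(\yatp-Y_{t_{i+1}})^2\big]+C\Delta t.
\end{equation*}

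It then remains to trade the full terminal error for its conditional-mean version. Since $Y_{t_{i+1}}$ is $\mathcal{G}_{t_{i+1}}$-measurable, setting $A:=E_{t_{i+1}}[\yatp]-Y_{t_{i+1}}$ and $B:=\yatp-E_{t_{i+1}}[\yatp]$ gives an orthogonal splitting ($A$ is $\mathcal{G}_{t_{i+1}}$-measurable and $E_{t_{i+1}}[B]=0$), so that
\begin{equation*}
E\big[(\yatp-Y_{t_{i+1}})^2\big]=E[A^2]+E[B^2].
\end{equation*}
The first term $E[A^2]$ is exactly the right-hand side of the lemma. For the fluctuation $E[B^2]$ I would exploit the measurability bookkeeping of the splitting scheme: the intermediate value $\tilde{Y}_{i+1}(t_{i+1})$ is $\mathcal{F}^W_{0,t_{i+1}}\vee\mathcal{F}^B_{t_{i+2},T}$-measurable, hence $\mathcal{G}_{t_{i+1}}$-measurable, so that $B$ is generated solely by the single update step $\int_{t_{i+1}}^{t_{i+2}}g(s,X_s,Y_{i+1}(s))\,\dbs$ of \eqref{SDE_interval}. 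Boundedness of $g$ and the It\^o isometry over an interval of length $\Delta t$ then bound $E[B^2]\le C\Delta t$, closing the estimate.

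The main obstacle is not the Gr\"onwall machinery but the two places where the doubly-stochastic structure intervenes. First, one must invoke the BDSDE It\^o formula with the correct sign for the $d\overleftarrow{B}$ quadratic variation and confirm that both integrals are genuine martingales for the relevant (non-monotone) filtration so that they vanish in expectation. Second, the fluctuation estimate $E[B^2]\le C\Delta t$ relies on the fact that the construction lets $W$-randomness beyond $t_{i+1}$ enter $\yatp$ only through one backward-integral update, and making this rigorous requires carefully tracking the $\mathcal{F}^W$- and $\mathcal{F}^B$-measurability of the discrete iterates.
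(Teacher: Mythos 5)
Your Gronwall/energy machinery is exactly the paper's, but the argument breaks at its very first step: the identification $\lim_{t\to t_{i+1}^-}\ya(t)=\yatp$ is false under the measurability structure of the splitting scheme, and the paper is careful never to assert it (continuity at $t_{i+1}$ is claimed only for the SDE-updated process $Y_i$, not for $\tilde{Y}_i$). Indeed, $\yatp=\tilde{Y}_{i+1}(t_{i+1})+\int_{t_{i+1}}^{t_{i+2}}g(s,X_s,Y_{i+1}(s))\dbs$ carries $W$-increments on $(t_{i+1},t_{i+2}]$ through $X_s$ in the integrand, whereas $\ya(t)$ for $t<t_{i+1}$ is $\mathcal{F}^W_{0,t}\vee\mathcal{F}^B_{t_{i+1},T}$-measurable; so the adapted intermediate solution has a jump at $t_{i+1}$, its left limit being (by independence of $\mathcal{F}^B_{0,t_{i+1}}$ from the quantities involved) $E_{t_{i+1}}[\yatp]$ rather than $\yatp$, and your difference equation with terminal datum $\yatp-Y_{t_{i+1}}$ simply does not hold for the pair $(\ya,\z)$ to which you then apply It\^{o}. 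The paper's proof avoids this by subtracting the two equations and \emph{immediately} taking $E[\cdot\,|\,\mathcal{G}_{0,t_{i+1}}]$, which yields \cref{20}: the terminal datum becomes $E_{t_{i+1}}[\yatp]-Y_{\tp}$, every term is properly adapted ($\ya(t)$ is $\mathcal{G}_t$-measurable), and then the generalized It\^{o} formula, Young's inequality with $\varepsilon=1/(2L)$, the bound $E\int_t^{t_{i+1}}|g(s,X_s,Y_s)|^2\ds\le C\Delta t$ from \textbf{(H3)} and \Cref{th1}, and Gronwall give the lemma directly --- your orthogonal decomposition $A+B$ never arises.

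Moreover, your repair step is internally inconsistent with your It\^{o} step. To legitimize the energy identity with terminal value $\yatp-Y_{t_{i+1}}$ you would have to solve \cref{BSDE_interval} over a filtration initially enlarged by the future $W$-increments contained in $\yatp$ (this keeps $W$ a Brownian motion on $[t_i,t_{i+1}]$, so the limit claim would then hold); but under that reading $\tilde{Y}_{i+1}(t_{i+1})$ is no longer $\mathcal{G}_{0,t_{i+1}}$-measurable --- recursively through its own terminal value it inherits $W$-noise on all of $(t_{i+1},T]$, down to $\Phi(X_T)$ --- so the fluctuation $B=\yatp-E_{t_{i+1}}[\yatp]$ is \emph{not} generated by a single update step, and $E[B^2]$ accumulates $O(\Delta t)$ contributions from all $O(N)$ later intervals, i.e.\ is $O(1)$, not $O(\Delta t)$. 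Conversely, under the paper's adapted reading your bound $E[B^2]\le C\Delta t$ is fine, but then the premise of your It\^{o} step fails. (Two smaller points: \textbf{(H1)}/\textbf{(H3)} give $g$ linear growth, not boundedness, so your last estimate also requires second-moment bounds for the splitting iterate $Y_{i+1}$, which \Cref{th1} states only for the exact solution; and such bounds must be established separately.) Your proof becomes correct, and in fact coincides with the paper's, once the conditioning on $\mathcal{G}_{0,t_{i+1}}$ is performed \emph{before} the energy estimate rather than after it.
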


\begin{proof}
Subtracting \cref{1b} for $t\in[t_i,\tp)$ from \cref{BSDE_interval}, and taking the conditional expectation $E[\cdot|\mathcal{G}_{0,t_{i+1}}]$ gives
\begin{equation}\label{20}
  \begin{split}
  &\ya(t)-Y_t=E_{t_{i+1}}[\yatp]-Y_{\tp}-\iit [\z(s)-Z_s]\dws\\
  &+\iit[\ffs-\fs]\ds-\iit \gs\dbs.
  \end{split}
\end{equation}
Note that $\ya(t)$ is $\mathcal{F}_{0,t}^W\vee \mathcal{F}_{\tp,T}^B$ measurable for $t<\tp$, thus it is $\mathcal{F}_{0,t}^W\vee \mathcal{F}_{t,T}^B$ measurable, i.e.~$\mathcal{G}_t$ measurable. Applying the generalized It\^{o}'s Lemma (see Lemma 1.3 in \cite{pardoux1994backward}) to $|\ya(t)-Y_t|^2$ and taking the expectation, we have, using Young's inequality with $\varepsilon=\f{1}{2L}$, and assumption \textbf{(H3)},
\begin{equation*}
   \begin{split}
       & E|\ya(t)-Y_t|^2+\iit E|\z(s)-Z_s|^2\ds\\
       &\leq E|E_{t_{i+1}}[\yatp]-Y_{\tp}|^2+2\iit L(E|X_s|^2+E|Y_s|^2)\ds\\
       &+2\iit E|g(s,0,0)|^2\ds+(2L+\f{1}{2})\iit E|\ya(s)-Y_s|^2\ds\\
       &+\f{1}{2}\iit E|\z(s)-Z_s|^2\ds.
   \end{split}
\end{equation*}
The desired result follows from  Gronwall's inequality, assumption \textbf{(H3)}, and \Cref{th1}.
\end{proof}

\begin{lemma}\label{th23}
Under assumptions \textbf{(H1)-(H3)}, for any given interval $[t_i,\tp)$, there exists a constant $C$, independent of $\Delta t$ and $X_0$,  such that
   \begin{equation}
   \sup\limits_{t\in[t_i,t_{i+1})}\big(E[(\yb(t)-Y_{t})^2]\big)\leq C E[(E_{t_{i+1}}[\yatp]-Y_{\tp})^2]+C\Delta t.
   \end{equation}
\end{lemma}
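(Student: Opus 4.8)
The plan is to bound the error $\yb(t)-Y_t$ by inserting the intermediate BSDE solution $\ya(t)$ and splitting into the SDE-step error $\yb(t)-\ya(t)$ and the BSDE-step error $\ya(t)-Y_t$, the latter already being controlled by \Cref{prop_36}. Concretely, I would start from
\[
E[(\yb(t)-Y_t)^2]\leq 2E[(\yb(t)-\ya(t))^2]+2E[(\ya(t)-Y_t)^2],
\]
so that the second term is immediately bounded by $C\,E[(E_{t_{i+1}}[\yatp]-Y_{\tp})^2]+C\Delta t$ via \Cref{prop_36}, and only the SDE-step term remains. Using $2a^2+2b^2$ rather than expanding the square avoids having to track the cross term with the backward integral $-\iit g(s,X_s,Y_s)\dbs$ appearing in the representation \eqref{20} of $\ya(t)-Y_t$.

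For the SDE-step term, the defining relation \eqref{SDE_interval} gives $\yb(t)-\ya(t)=\iit\ggs\dbs$. The integrand is $\mathcal{F}_{0,\tp}^W\vee\mathcal{F}_{s,T}^B$-measurable, hence backward-adapted in $B$ and independent of the relevant $W$-increments, so the It\^o isometry for the backward integral applies and yields
\[
E[(\yb(t)-\ya(t))^2]=\iit E[(\ggs)^2]\ds.
\]
I would then bound the integrand through assumption \textbf{(H3)}: writing $\ggs=(\ggs-g(s,0,0))+g(s,0,0)$ and using the Lipschitz and growth bounds gives $E[(\ggs)^2]\leq C(1+E|X_s|^2+E|\yb(s)|^2)$. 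Substituting $E|\yb(s)|^2\leq 2E[(\yb(s)-Y_s)^2]+2E|Y_s|^2$ and using that $E|X_s|^2$ and (by \Cref{th1}) $E|Y_s|^2$ are bounded uniformly, while the interval $\iit$ has length at most $\Delta t$, produces
\[
E[(\yb(t)-\ya(t))^2]\leq C\Delta t+C\iit E[(\yb(s)-Y_s)^2]\ds.
\]

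Combining the two estimates and writing $\psi(t):=E[(\yb(t)-Y_t)^2]$ gives
\[
\psi(t)\leq C\,E[(E_{t_{i+1}}[\yatp]-Y_{\tp})^2]+C\Delta t+C\iit\psi(s)\ds,
\]
and the claim follows from the backward Gronwall inequality, the factor $e^{C(\tp-t)}\leq e^{C\Delta t}$ being bounded for $\Delta t\leq T$. The main obstacle is the self-referential structure of the SDE-step bound: because $\yb$ appears inside $g$ in the very integral that controls $\yb(t)-\ya(t)$, the estimate cannot be closed in a single step but must carry $\psi(s)$ under the integral and be resolved through Gronwall; this forces the intermediate trade $E|\yb(s)|^2\leq 2\psi(s)+2E|Y_s|^2$ so that the unknown quantity reappears in a Gronwall-compatible form, with \Cref{th1} supplying the a priori bound on $E|Y_s|^2$. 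A secondary point needing care is the justification of the backward It\^o isometry, which relies on the $\mathcal{G}$-measurability and backward $B$-adaptedness of $\ggs$ noted in Section~3.1.
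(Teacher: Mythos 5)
Your proof is correct and takes essentially the same route as the paper, which uses the identical decomposition $\yb(t)-Y_t=(\yb(t)-\ya(t))+(\ya(t)-Y_t)$ and cites \Cref{prop_36}, It\^{o}'s isometry for the backward integral, and assumption \textbf{(H3)}. The only difference is explicitness: you spell out the Gronwall step needed to close the self-referential bound arising from $\yb(s)$ appearing inside $g$, a detail the paper's one-line proof leaves implicit.
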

\begin{proof}
Note that
\begin{equation*}
  \yb(t)-Y_t=\yb(t)-\ya(t)+\ya(t)-Y_t.
\end{equation*}
This result is then  a direct consequence of \Cref{prop_36}, It\^{o}'s isometry, and the assumption \textbf{(H3)}.
\end{proof}

Combining \Cref{th1} and \Cref{th21}, using Young's inequality, we arrive an estimate on $E[(\z(t)-Z_t)^2]$.
\begin{lemma}\label{prop_38}
Under the assumptions \textbf{(H1)-(H3)}, for any given interval $[t_i,\tp)$, there exits constant a $C$, independent of $\Delta t$ and $X_0$, such that
    \begin{equation*}
   \sup\limits_{t\in[t_i,t_{i+1})}\big(E[(\z(t)-Z_t)^2]\big)\leq (1+\epsilon_0)E[(\tilde{Z}_i(t_{i+1}-0)-Z_{t_{i+1}})^2]+C\Delta t,
   \end{equation*}
for some suitable $\epsilon_0>0$.
\end{lemma}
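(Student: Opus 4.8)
The plan is to isolate the right-endpoint error $E[(\tilde{Z}_i(t_{i+1}-0)-Z_{t_{i+1}})^2]$ and to absorb everything else into the $C\Delta t$ remainder. For a fixed $t\in[t_i,t_{i+1})$ I would begin from the telescoping identity
\[
\tilde{Z}_i(t)-Z_t=\big(\tilde{Z}_i(t_{i+1}-0)-Z_{t_{i+1}}\big)+\big(\tilde{Z}_i(t)-\tilde{Z}_i(t_{i+1}-0)\big)+\big(Z_{t_{i+1}}-Z_t\big),
\]
and apply Young's inequality in the form $(a+b)^2\le(1+\epsilon_0)a^2+(1+\epsilon_0^{-1})b^2$, taking $a$ to be the first (main) term and $b$ the sum of the remaining two temporal increments. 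After taking expectations this places exactly the factor $(1+\epsilon_0)$ in front of $E[(\tilde{Z}_i(t_{i+1}-0)-Z_{t_{i+1}})^2]$, while the leftover is $C_{\epsilon_0}$ times the mean squares of the two increments.

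I would then estimate each increment. For the true solution, the $Z$-regularity estimate in \Cref{th1} gives $E[(Z_{t_{i+1}}-Z_t)^2]\le C(t_{i+1}-t)\le C\Delta t$, uniformly in $t\in[t_i,t_{i+1})$. For the intermediate process $\tilde{Z}_i$, \Cref{th21} controls increments only relative to the left endpoint $t_i$, so I would further split
\[
\tilde{Z}_i(t)-\tilde{Z}_i(t_{i+1}-0)=\big(\tilde{Z}_i(t)-\tilde{Z}_i(t_i)\big)-\big(\tilde{Z}_i(t_{i+1}-0)-\tilde{Z}_i(t_i)\big)
\]
and apply \Cref{th21} to each piece (the second obtained by letting $t\to t_{i+1}-0$, which is legitimate because the BSDE \eqref{BSDE_interval} is the special case $g\equiv0$ of \eqref{1b} and hence, by \Cref{th1}, $\tilde{Z}_i$ admits a continuous version). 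This yields $E[(\tilde{Z}_i(t)-\tilde{Z}_i(t_{i+1}-0))^2]\le C\Delta t$.

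Combining the two increment bounds, and noting that once $\epsilon_0>0$ is fixed the factor $(1+\epsilon_0^{-1})$ is a constant that can be folded into $C$, I would obtain, for every $t\in[t_i,t_{i+1})$,
\[
E[(\tilde{Z}_i(t)-Z_t)^2]\le(1+\epsilon_0)E[(\tilde{Z}_i(t_{i+1}-0)-Z_{t_{i+1}})^2]+C\Delta t;
\]
since the right-hand side no longer depends on $t$, taking the supremum over $t$ finishes the argument. The one genuinely delicate point is the grouping in Young's inequality: the small $O(\Delta t)$ regularity gain must be spent entirely on the two increment terms, so that the right-endpoint error — which is precisely the quantity needed to drive the telescoping recursion across the intervals in the proof of \Cref{main} — retains the tight coefficient $1+\epsilon_0$ rather than a constant that would spoil the summation. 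Everything else is routine bookkeeping once the two increment estimates are in place.
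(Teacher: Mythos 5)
Your proof is correct and follows exactly the route the paper intends: the paper's one-line proof of \Cref{prop_38} says precisely ``combine \Cref{th1} and \Cref{th21} using Young's inequality,'' and your three-term telescoping decomposition, with the $(1+\epsilon_0)$ weight reserved for the endpoint error $\tilde{Z}_i(t_{i+1}-0)-Z_{t_{i+1}}$ and the increments absorbed into $C\Delta t$, is the natural (and correct) way to carry that out. Your care about splitting the $\tilde{Z}_i$ increment through the left endpoint $t_i$ (since \Cref{th21} only controls increments from $t_i$) and about justifying the limit $t\to t_{i+1}-0$ via the $g\equiv 0$ case of \Cref{th1} fills in exactly the details the paper leaves implicit.
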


\noindent {\bf Proof of \Cref{main}}:
The main ingredients of the proof are the  estimations for the errors $\yb(t_i)-Y_{t_i}$ and $\z(t_i)-Z_{t_i}$.  Once these estimations are obtained, the desired result of the theorem is the consequence of application of the discrete Gronwall inequality. \vspace{0.25em}

\noindent \textbf{Estimation for the error $\yb(t_i)-Y_{t_i}$.}\\
Subtracting \cref{1b} for $t\in[t_i,\tp)$ from \cref{SDE_interval} and substituting \cref{20} with result we have that for $t=t_i$
\begin{equation}\label{8}
\begin{split}
  &\yb(t_i)-Y_{t_i}=E_{t_{i+1}}[\yatp]-Y_{\tp}+\ii [ \ggs-\gs ]\dbs\\
  &+\ii [ \ffs-\fs] \ds-\ii [\z(s)-Z_s ]\dws.
  \end{split}
\end{equation}
To simplify notation in subsequent derivations, we shall use the following shorthand notation:
\begin{equation*}
\begin{split}
  &e^i_y:= E_{t_i}[Y_i(t_i)] - Y_{t_i},~e_z^{i+1}:=\tilde{Z}_i(t_{i+1}-0)-Z_{t_{i+1}},\\
  &\delta f^i(s):=\ffs-\fs,\\
  &\delta g^i(s):=\ggs-\gs.\\
  \end{split}
\end{equation*}

Taking  the conditional expectation $\E[\cdot]$ on both sides of the above yields
\begin{equation}\label{27}
  \begin{split}
E_{t_i}[Y_i(t_i)] - Y_{t_i}&=\E[E_{t_{i+1}}[\yatp]-Y_{\tp}]\\
&+\ii\E[ \delta f^i(s)]\ds+\ii \E[\delta g^i(s)]\dbs.
\end{split}
\end{equation}
Here we have used Fubini's theorem and the fact that $Y_{t_i}$ is $\mathcal{F}_{0,t_i}^W\vee \mathcal{F}_{0,T}^B$ measurable, i.e.~ $\mathcal{G}_{0,t_i}$ measurable.

Next we consider the mean square estimation for $e_y^i$. Square and then take the expectation on both sides of \cref{27} to obtain
\begin{equation}\label{e_y^2-1}
  \begin{split}
  &E[(e_y^i)^2]=E[(\E[e_y^{i+1}])^2]+E\Big[\Big(\ii\E[\delta f^i(s)]\ds+\ii \E[\delta g^i(s)]\dbs\Big)^2 \Big]\\
  & \quad +2E\bigg[\big(\E[e_y^{i+1}]\big)\cdot\bigg(\ii\E[\delta f^i(s)]\ds+ \ii\E[\delta g^i(s)]\dbs \bigg)\bigg].
\end{split}
\end{equation}
%
 Using the elementary inequality $(a+b)^2\leq 2(a^2+b^2)$ on
 \cref{e_y^2-1}, we have
\begin{equation}\label{91}
  \begin{split}
  &E[(e_y^i)^2]\leq I_1+I_2+I_3+I_4+I_5,
\end{split}
\end{equation}
where
\begin{equation*}
   \begin{split}
       & I_1:= E[(\E[e_y^{i+1}])^2],\\
       & I_2:=2E\Big[\big(\ii\E[\delta f^i(s)]\ds\big)^2\Big],\\
       & I_3:=2E\Big[\big(\ii\E[\delta g^i(s)]\dbs\big)^2\Big],\\
       & I_4:=2E\Big[\big(\E[e_y^{i+1}]\big)\cdot\big(\ii\E[\delta f^i(s)]\ds\big)\Big],\\
       & I_5:=2E\Big[\big(\E[e_y^{i+1}]\big)\cdot\big(\ii\E[\delta g^i(s)]\dbs\big)\Big].
   \end{split}
\end{equation*}
 By Cauchy's inequality, Jensen's inequality, and the assumptions \textbf{(H1)-(H3)}, we have
\begin{equation*}
   \begin{split}
       I_2&\leq 2 \Delta t E\Big[\ii ( \E[\delta f^i(s)])^2\ds\Big] \\
       &\leq 2 \Delta t \ii L\Big(E[(\ya(s)-Y_{s})^2 ]+E\big[(\z(s)-Z_s)^2\big]\Big)\ds.\\
   \end{split}
\end{equation*}
Then, from Lemma \ref{prop_36} and Lemma \ref{prop_38}, we get
\begin{equation}\label{14}
I_2 \leq C (\Delta t)^2  E[(e_y^{i+1})^2] + 2L(1+\epsilon_0)(\Delta t)^2 E\big[(e_z^{i+1})^2 \big] + O(\Delta t^3).
\end{equation}
Next we estimate $I_3$.  To simplify  the  presentation, we use abbreviated notations $\Theta_r=(r,X_r,Y_r)$ and $\tilde{\Theta}_r=(r,X_r,\yb(r))$, and use subscripts of function $g$ to indicate partial differentiations. We also use $\dd [X]_r$ to denote the quadratic variation of $X_r$, and $\dd [X,Y]_r$  the quadratic covariation of $X_r$ and $Y_r$.
In order to derive an estimation for $I_3$, we first apply the It\^{o}-Taylor expansions for $g(\Theta_s)$ and $g(\tilde{\Theta}_s)$ on interval $[s, t_{i+1}]$ to obtain
\begin{equation}\label{g1}
   \begin{split}
   &g(\Theta_{\tp})=g(\Theta_s)+\iis g_t(\Theta_r)\dr+\iis g_x(\Theta_r)\dd X_r+\iis g_y(\Theta_r)\dd Y_r\\
   &+\frac{1}{2}\iis g_{xx}(\Theta_r)\dd [X]_r+\frac{1}{2}\iis g_{yy}(\Theta_r)\dd [Y]_r+\iis g_{xy}(\Theta_r)\dd [X,Y]_r,
\end{split}
\end{equation}
and
\begin{equation}\label{g2}
   \begin{split}
   &g(\tilde{\Theta}_{\tp})=g(\tilde{\Theta}_s)+\iis g_t(\tilde{\Theta}_r)\dr+\iis g_x(\tilde{\Theta}_r)\dd X_r+\iis g_y(\tilde{\Theta}_r)\dd \yb(r)\\
   &+\frac{1}{2}\iis g_{xx}(\tilde{\Theta}_r)\dd [X]_r+\frac{1}{2}\iis g_{yy}(\tilde{\Theta}_r)\dd [\yb]_r+\iis g_{xy}(\tilde{\Theta}_r)\dd [X,\yb]_r.
\end{split}
\end{equation}
 Note that   $dX_r = dW_r$, it then follows from the generalized It\^{o}'s Lemma (see Lemma 1.3 in \cite{pardoux1994backward}) that
\begin{equation*}
   \begin{split}
       & \dd [X]_r=\dr, ~\dd [Y]_r=-g^2(\Theta_r)\dr+(Z_r)^2\dr,~\dd [X,Y]_r=Z_r\dr \\
       & \dd [\yb]_r=-g^2(\tilde{\Theta}_r)\dr+(\tilde{Z}_i(r))^2\dr,~\dd [X,\yb]_r=\tilde{Z}_i(r)\dr.
   \end{split}
\end{equation*}
Subtracting \cref{g1} from \cref{g2}, we have
\begin{equation}\label{dg}
   \begin{split}
   &g(\tilde{\Theta}_s)-g(\Theta_s)=g(\tilde{\Theta}_{\tp})-g(\Theta_{\tp})+\iis [(g_y\cdot g)(\tilde{\Theta}_r)-(g_y\cdot g)(\Theta_r)] \dbr\\
   &-\iis [(g_x(\tilde{\Theta}_r)+g_y(\tilde{\Theta}_r)\tilde{Z}_i(r))-(g_x(\Theta_r)+g_y(\Theta_r)Z_r)]\dd W_r+R_{g,Y}^{i}(s),
\end{split}
\end{equation}
where $R_{g,Y}^{i}$ contains all the $\iis\cdot\dr$ integrals:
\begin{equation*}
   \begin{split}
       &R_{g,Y}^{i}(s):=-\iis [g_t(\tilde{\Theta}_r)-g_t(\Theta_r)]\dr\\
&+\iis [g_y(\tilde{\Theta}_r)\ffr-g_y(\Theta_r)\fr]\dr\\
&-\frac{1}{2}\iis [g_{xx}(\tilde{\Theta}_r)+g_{yy}(\tilde{\Theta}_r)(\tilde{Z}_i(r))^2-g_{xx}(\Theta_r)-g_{yy}(\Theta_r)(Z_r)^2]\dr\\
&+\frac{1}{2}\iis [g_{yy}(\tilde{\Theta}_r)g^2(\tilde{\Theta}_r)-g_{yy}(\Theta_r)g^2(\Theta_r)]\dr\\
&-\iis [g_{xy}(\tilde{\Theta}_r)\tilde{Z}_i(r)-g_{xy}(\Theta_r)Z_r]\dr,
   \end{split}
\end{equation*}
and it's easy to see that $\sup\limits_{s\in[t_i,t_{i+1})}E[(R_{g,Y}^{i}(s))^2] = O(\Delta t^2)$.

Taking the conditional expectation $\E[\cdot]$ on both sides of \cref{dg}, we have
\begin{equation*}
   \begin{split}
        \E[\delta g^i(s)]& =\E[\delta g^i(\tp)] +\E\bigg[\iis\delta (g_yg)^i(r)\dbr\bigg]+\E[R_{g,Y}^{i}(s)],\\
   \end{split}
\end{equation*}
where $\delta (g_yg)^i(r):=(g_y\cdot g)(\tilde{\Theta}_r)-(g_y\cdot g)(\Theta_r)$. By the above estimation,  It\^{o}'s isometry, the elementary inequality $(a+b+c)^2\leq 3(a^2+b^2+c^2)$, and Jensen's inequality, we obtain
\begin{equation}
   \begin{split}
       I_3&=2E\bigg(\ii(\E[\delta g^i(s)])^2\ds\bigg)\\
       &\leq 6\ii\bigg(E\big[\big(\E[\delta g^i(\tp)]\big)^2\big]+\iis E[(\E[\delta (g_yg)^i(r)])^2]\dr+E[(R_{g,Y}^{i}(s))^2]\bigg)\ds.
   \end{split}
\end{equation}
Using Lemma \ref{th23}, the assumptions \textbf{(H1)-(H3)} and Jensen's inequality, we have
\begin{equation}\label{18}
   \begin{split}
       I_3&\leq 6|g_y|_{\infty}^2\Delta t E[(e_y^{i+1})^2]+C \Delta t ^2 E[(e_y^{i+1})^2]+ O(\Delta t ^3).\\
   \end{split}
\end{equation}

We now turn to the estimation of  $I_4$.  First we decompose $\delta f^i(s)$, which is the abbreviation for $\ffs-\fs$, into three parts to write $I_4$ as
\begin{equation*}\label{71}
\begin{split}
I_4=2E\bigg[\E[e_y^{i+1}]\cdot\ii\big(\E[\delta f^{i,a}]+\E[\delta f^{i,b}]+\E[\delta f^{i,c}]  \big)\ds\bigg],\\
  \end{split}
\end{equation*}
where
\begin{equation*}
   \begin{split}
   &\delta f^{i,a}:=\ffs-f(\tp,X_{\tp},\yatp,\tilde{Z}_{i}(t_{i+1}-0)),\\
   &\delta f^{i,b}:=f(\tp,X_{\tp},\yatp,\tilde{Z}_{i}(t_{i+1}-0))-f(\tp,X_{\tp},Y_{\tp},Z_{t_{i+1}}),\\
&\delta f^{i,c}:=f(\tp,X_{\tp},Y_{\tp},Z_{t_{i+1}})-\fs.
   \end{split}
\end{equation*}
By It\^{o}-Taylor expansion and Lemma 3.2, we see that $\E[\delta f^{i,a}]=O(\Delta t)$. Hence
\begin{equation*}
  2E\bigg[\E[e_y^{i+1}]\cdot\ii\E[\delta f^{i,a}]\ds\bigg]\leq \f{\Delta t}{3} E\big[(e_y^{i+1})^2\big]  + O(\Delta t^3).
\end{equation*}
On the other hand, by Theorem \ref{th1}, we have $E[\delta f^{i,c}]=O(\Delta t)$. Therefore, it follows from the properties of conditional expectations and Young’s inequality that
\begin{equation}\label{2}
   \begin{split}
       & 2E\bigg[\E[e_y^{i+1}]\cdot\ii\big(\E[\delta f^{i,c}]  \big)\ds\bigg]=2E\bigg[E\big[\E[e_y^{i+1}]\cdot\ii\big(\E[\delta f^{i,c}]  \big)\ds|\mathcal{F}^B_{t_i,t_{i+1}}\big]\bigg] \\
       & =2E[e_y^{i+1}]\ii E[\delta f^{i,c}]\ds\leq \f{\Delta t}{3} E\big[(e_y^{i+1})^2\big]  + O(\Delta t^3).
   \end{split}
\end{equation}
Putting the above estimates together, then using the assumptions \textbf{(H1)}-\textbf{(H3)} and Young’s inequality, we obtain
\begin{equation}\label{29}
\begin{aligned}
I_4 
& \leq {\Delta t} E\big[(\E[e_y^{i+1}])^2\big] + 6|f_y|_{\infty}^2\Delta t E[(e_y^{i+1})^2]+ 6|f_z|_{\infty}^2\Delta t E[(\E[e_z^{i+1}] )^2] + O(\Delta t^3)\\
&=(1 + 6|f_y|_{\infty}^2)\Delta t E[(e_y^{i+1})^2]+6|f_z|_{\infty}^2 \Delta t E\big[(\E[e_z^{i+1}] )^2 \big]+O(\Delta t^3).
\end{aligned}
\end{equation}
To estimate the last term, we use an argument similar to \eqref{2}.
\begin{equation*}
   \begin{split}
       I_5=&2E\Big[E[\E[e_y^{i+1}]\cdot\ii\E[\delta g^i(t_{i+1})]\dbs|\mathcal{F}^B_{t_i,t_{i+1}}]\Big]\\
       &+2E\Big[E[\E[e_y^{i+1}]\cdot\ii(\E[\delta g^i(s)-\delta g^i(t_{i+1})])\dbs|\mathcal{F}^B_{t_i,t_{i+1}}]\Big] \\
       & =2E\Big[\E[e_y^{i+1}]\E[\delta g^i(t_{i+1})]\Big]E[\dbi]+2E[e_y^{i+1}]\cdot E\bigg[\ii(\E[\delta g^i(s)-\delta g^i(t_{i+1})])\dbs\bigg]=0.
   \end{split}
\end{equation*}

Substituting the above estimations \cref{14,18,29} into \cref{91}, we have
\begin{equation}\label{Estimation:Y}
  \begin{split}
  E[(e_y^i)^2]\leq&   E[(\E[e_y^{i+1}])^2]+C_y^1 \Delta t E\big[(e_y^{i+1} )^2 \big]+6|f_z|_{\infty}^2\Delta t E\big[(\E[e_z^{i+1}] )^2 \big]\\
  &+ C \Delta t^2 E[(e_y^{i+1})^2] + 2L(1+\epsilon_0) \Delta t^2 E\big[(\E[e_z^{i+1}] )^2 \big] +O(\Delta t^3),\\
\end{split}
\end{equation}
where $C_y^1 : = 1 + 6|f_y|_{\infty}^2+6|g_y|_{\infty}^2$ is a constant.
\vspace{0.5em}

\noindent \textbf{Estimation for the error $\z(t_i)-Z_{t_i}$.}\\
In order to derive an estimation for $\z(t_i)-Z_{t_i}$, we subtract \cref{1b} for $t\in[t_i,\tp)$ from \cref{20}, let $t = t_i$, multiply both sides of  the resulting equation by $\dwi$, and then take the conditional expectation $\E[\cdot]$ to obtain
\begin{equation}\label{6}
\begin{split}
  &  \ii \E[\tilde{Z}^i(s)-Z_s]\ds =\E[e_y^{i+1}\dwi] +\ii\E[\delta f^i(s)\dwi]\ds \\
  & \hspace{6em}  - \ii \E\Big[g(s,X_s,Y_s)\dwi \Big]\dbs ,
\end{split}
\end{equation}
where we have used the  Fubini's theorem and the fact that $\E[( \tilde{Y}_i(t_{i})-Y_{t_i}) \Delta W_i]=0$.
Rewrite $\E[\tilde{Z}^i(s)-Z_s]$ as
\begin{equation*}
    \E[\tilde{Z}_i(s)- \tilde{Z}_i(t_{i+1}-0)] +\E[ \tilde{Z}_i(t_{i+1}-0) - Z_{t_{i+1}}] +\E[ Z_{t_{i+1}} - Z_s ].
\end{equation*}
Using \Cref{th1} (4), we have
\begin{equation}\label{difference_W}
\begin{split}
  &\Delta t \E[e_z^{i+1}] = \ \E[e_y^{i+1}\dwi]+\ii\E[\delta f^i(s)\dwi]\ds \\
  &-\ii \E[g(\Theta_s) \Delta W_i ]\dbs+ \int_{t_i}^{t_{i+1}}\int_{s}^{t_{i+1}}   \E[\nabla g(\Theta_r)] \dbr\ds + O(\Delta t^2),
  \end{split}
\end{equation}
where  $\Theta_r=(r,X_r,Y_r)$ . Similar to the argument in \cref{g1} ( notice that here we apply the It\^o formula on interval $[t_i, s]$ instead of $[s, t_{i+1}]$ in \cref{g1} ), we obtain
$$
g(\Theta_s) = g(\Theta_{t_i}) + \int_{t_i}^{s} \big(g_x(\Theta_r) + g_y(\Theta) Z_r \big)dW_r - \int_{t_i}^{s} (g_y\cdot g)(\Theta_r) \dbr + R_g^i,
$$
where all the $\int_{t_i}^{s} \cdot dr$ terms are included in $R_g^i$. The above equation leads to
\begin{equation*}
\begin{aligned}
& \int_{t_i}^{t_{i+1}}  \E\Big[g(\Theta_s) \Delta W_i \Big] \dbs= \int_{t_i}^{t_{i+1}}  \int_{t_i}^{s} \E\big[g_x(\Theta_r) + g_y(\Theta_r) Z_r \big] \dr \dbs \\
& \hspace{-1em} - \int_{t_i}^{t_{i+1}}  \int_{t_i}^{s} \E[ (g_y\cdot g)(\Theta_r) \Delta W_i]  \dbr \dbs + \int_{t_i}^{t_{i+1}}  \E\Big[R_g^i \Delta W_i \Big] \dbs.
\end{aligned}
\end{equation*}
Here we have used the fact $\E[g(\Theta_{t_i}) \Delta W_i]  = 0$. Decompose the second term on the right hand side of the above equation into two terms
$$
\begin{aligned}
& \int_{t_i}^{t_{i+1}}  \int_{t_i}^{s} \E[ (g_y\cdot g)(\Theta_{t_i}) \Delta W_i]  \dbr \dbs\\
& \qquad + \int_{t_i}^{t_{i+1}}  \int_{t_i}^{s} \E[ \big( (g_y\cdot g)(\Theta_r) - (g_y\cdot g)(\Theta_{t_i}) \big)\Delta W_i] \dbr \dbs.
\end{aligned}
$$
By properties of conditional expectations, the first part is 0. Therefore
\begin{equation*}\label{gdW}
\begin{split}
 &\int_{t_i}^{t_{i+1}}  \E\Big[g(\Theta_s) \Delta W_i \Big] \dbs= \int_{t_i}^{t_{i+1}}  \int_{t_i}^{s} \E\big[g_x(\Theta_r) + g_y(\Theta_r) Z_r \big] \dr \dbs\\
 &-\int_{t_i}^{t_{i+1}}  \int_{t_i}^{s} \E[ \big( (g_y\cdot g)(\Theta_r) - (g_y\cdot g)(\Theta_{t_i}) \big)\Delta W_i] \dbr \dbs+ \int_{t_i}^{t_{i+1}}  \E\Big[R_g^i \Delta W_i \Big] \dbs.
\end{split}
\end{equation*}
Putting this into \cref{difference_W}, and noting that $\nabla g (\Theta_r) = g_x(\Theta_r) + g_y(\Theta_r) Z_r$ (as defined in \Cref{th1}), we obtain
%
\begin{equation}\label{difference_W:modified}
\begin{split}
  &\Delta t \E[e_z^{i+1}] =  \E[e_y^{i+1}\dwi]+\ii\E[\delta f^i(s)\dwi]\ds+R_z^{i+1},\\
  \end{split}
  \end{equation}
where
\begin{equation*}
\begin{split}
  & R_z^{i+1}=-\int_{t_i}^{t_{i+1}}  \int_{t_i}^{s} \E\big[\nabla g(\Theta_r)-\nabla g(\Theta_{t_i}) \big] \dr \dbs\\
 &+\int_{t_i}^{t_{i+1}}  \int_{t_i}^{s} \E[ \big( (g_y\cdot g)(\Theta_r) - (g_y\cdot g)(\Theta_{t_i}) \big)\Delta W_i] \dbr \dbs- \int_{t_i}^{t_{i+1}}  \E\Big[R_g^i \Delta W_i \Big] \dbs\\
 &+ \int_{t_i}^{t_{i+1}}\int_{s}^{t_{i+1}}   \E[\nabla g(\Theta_r)-\nabla g(\Theta_{t_i})] \dbr\ds + O(\Delta t^2).
  \end{split}
\end{equation*}
It is easy to check that $E[(R_z^{i+1})^2]=O(\Delta t^4)$. Similar to the discussions for the $e_y^{i}$ error, we square \cref{difference_W:modified}, and take expectation to get
\begin{equation}\label{difference_W^2}
\begin{split}
  &\Delta t^2 E\big[(\E[e_z^{i+1}] )^2 \big] 
  \leq E\big[ \big( \E[e_y^{i+1}\dwi] \big)^2 \big] + 2 E\bigg[\bigg(\ii\E[\delta f^i(s)\dwi]\ds\bigg)^2\bigg] \\
  & + 2  E\Big[ \E[e_y^{i+1}\dwi] \cdot\ii\E[\delta f^i(s)\dwi]\ds \Big]+ E\Big[ \E[e_y^{i+1}\dwi] R_z^{i+1} \Big] + O(\Delta t^4) .
  \end{split}
\end{equation}
For the cross product terms in the above equation, we use Young's inequality to get
$$
\begin{aligned}
& 2  E\Big[ \E[e_y^{i+1}\dwi] \ii\E[\delta f^i(s)\dwi]\ds \Big] \\
\leq & \ \f{L \Delta t}{\epsilon_1} E\big[\big( \E[e_y^{i+1}\dwi] \big)^2 \big] + \f{\epsilon_1}{L \Delta t} E\Big[ \Big(\ii\E[\delta f^i(s)\dwi]\ds\Big)^2\Big]\\
\leq & \ \f{L \Delta t^2}{\epsilon_1} E\big[\big( e_y^{i+1} \big)^2 \big] + \epsilon_1 \Delta t^2 E\Big[ (C\E\big[ e_y^{i+1}])^2 + (1+\epsilon_0)( \E[e_z^{i+1}])^2\big]\Big]+O(\Delta t^4)
\end{aligned}
$$
and
$$E\Big[ \E[e_y^{i+1}\dwi]R_z^{i+1} \Big] \leq \epsilon_2 E\Big[ \big(\E[e_y^{i+1}\dwi ] \big)^2 \Big]+ O(\Delta t^4). $$
From the assumption \textbf{(H3)}, Lemma \ref{prop_36} and Lemma \ref{prop_38}, and the above estimations, we have
\begin{equation*}
\begin{split}
  \Delta t^2 E\big[ ( \E[e_z^{i+1}])^2 \big]   & \leq E\big[ \big( \E[e_y^{i+1}\dwi] \big)^2 \big] + C \Delta t^3  E[(e_y^{i+1})^2] + 2L(1+\epsilon_0)\Delta t^3 E\big[\big(\E[e_z^{i+1}]\big)^2\big]  \\
  & \quad + \f{L \Delta t^2}{\epsilon_1} E\big[\big( e_y^{i+1} \big)^2 \big] + C\epsilon_1 \Delta t^2 E[ (e_y^{i+1})^2] + \epsilon_1(1+\epsilon_0) \Delta t^2 E[( \E[e_z^{i+1}])^2]\\
  & \quad  + \epsilon_2 E\Big[ \big(\E[e_y^{i+1}\dwi ] \big)^2 \Big]+ O(\Delta t^4).
  \end{split}
\end{equation*}
Set $\f{1}{1+\epsilon_2} < 1$. Dividing both sides of the above estimate by $\Delta t (1 + \epsilon_2)$, and noting that $\big(\E[ e_y^{i+1}\dwi ]\big)^2\leq \Delta t \Big(\E[(e_y^{i+1})^2]-(\E[e_y^{i+1}])^2\Big)$ ,  we obtain
\begin{equation}\label{Estimation:Z}
\begin{split}
  \f{\Delta t}{1 + \epsilon_2} E\big[ (\E[e_z^{i+1}])^2 \big]   & \leq E\big[\E[(e_y^{i+1})^2]-(\E[e_y^{i+1}])^2\big] + \big( \f{L}{\epsilon_1} +C\epsilon_1 \big)\Delta t E\big[\big( e_y^{i+1} \big)^2 \big]\\
  &  \quad + \epsilon_1(1+\epsilon_0) \Delta t E\Big[ \E[(e_z^{i+1})^2\big]\Big] +  C \Delta t^2  E[(e_y^{i+1})^2] \\
  & \quad + 2L(1+\epsilon_0)\Delta t^2E\big[\big(\E[e_z^{i+1}]\big)^2\big] + O(\Delta t^3).
  \end{split}
\end{equation}
which is the desired estimate for $e_z^{i+1}$.

\vspace{0.5em}

Now  we use the above estimates, \cref{Estimation:Y} for $\yb(t_i)-Y_{t_i}$, and \cref{Estimation:Z} for $\z(t_i)-Z_{t_i}$,  to derive the error estimate of the theorem.
First  we combine  \cref{Estimation:Y} and \cref{Estimation:Z} to obtain
\begin{equation}\label{Y+Z}
\begin{aligned}
& E[(e_y^i)^2] +   \f{\Delta t}{1 + \epsilon_2} E\big[ ( \E[e_z^{i+1}])^2 \big]  \\
\leq & \ E[ (e_y^{i+1})^2] + \big(C_y^1 + \f{L}{\epsilon_1} + C\epsilon_1 \big) \Delta t E[(e_y^{i+1})^2]+ \big(6|f_z|^2_\infty + \epsilon_1(1+\epsilon_0)\big) \Delta t E\big[( \E[e_z^{i+1}])^2 \big] \\
& \ + C\Delta t^2 E[(e_y^{i+1})^2] + 4L(1+\epsilon_0) \Delta t^2 E\big[( \E[e_z^{i+1}])^2\big] + O(\Delta t^3).
\end{aligned}
\end{equation}
Next we properly choose  $\epsilon_0$, $\epsilon_1$ and $\epsilon_2$ to control the $E\big[ ( \E[e_z^{i+1}])^2\big]$ terms on the right hand side of \cref{Y+Z}. Specifically, we move all the $E\big[( \E[e_z^{i+1}])^2\big]$ terms to the left hand side and get
\begin{equation}\label{Estimate:Y+Z}
\begin{aligned}
 E[(e_y^i)^2] +  C_z \Delta t E\big[ ( \E[e_z^{i+1}])^2 \big]  \leq E[ (e_y^{i+1})^2] + C_y \Delta t E[(e_y^{i+1})^2] + O(\Delta t^3)
\end{aligned}
\end{equation}
where $C_y = C_y^1 + \f{L}{\epsilon_1} + C\epsilon_1 + C\Delta t$, $C_z = \f{1}{1+\epsilon_2} - 6|f_z|^2_\infty  - \epsilon_1(1+\epsilon_0) - 4L(1+\epsilon_0) \Delta t$. Now we choose constants  $\epsilon_0$, $\epsilon_1$ and $\epsilon_2$ sufficiently small  such that $\f{1}{1+\epsilon_2} - 6|f_z|^2_\infty  - \epsilon_1(1+\epsilon_0) > 0$.  When the temporal step size $\Delta t$ is chosen such that   $\Delta t < (\f{1}{1+\epsilon_2} - 6|f_z|^2_\infty  - \epsilon_1(1+\epsilon_0) )/ 4L(1+\epsilon_0)$, we have $C_z > 0$. As a result, the estimate \cref{Estimate:Y+Z} becomes
\begin{equation*}
 E[(e_y^i)^2]   \leq E[ (e_y^{i+1})^2] + C_y \Delta t E[(e_y^{i+1})^2] + O(\Delta t^3),
\end{equation*}
which gives
\begin{equation}\label{Analysis:Y}
\max\limits_{1\leq i\leq N}\left(E[(e_y^i)^2] \right)\leq C\Delta t^2
\end{equation}
according to the discrete Gronwall 's inequality. This is the first part of \cref{Theorem:main}.

For the second part of  \cref{Theorem:main}, we substitute  \cref{Analysis:Y} into \cref{Estimate:Y+Z} to obtain
\begin{equation}\label{Analysis:Z}
 \max\limits_{1\leq i\leq N-1} E\big[ ( \E[e_z^{i+1}])^2 \big] \leq C \Delta t.
 \end{equation}
Since $\tilde{Z}_i(t_{i}) - Z_{t_{i}}$ is $\mathcal{F}_{0,t_i}^W\vee \mathcal{F}_{0,T}^B$ measurable, i.e.~$\mathcal{G}_{0,t_i}$ measurable, we have
\begin{equation*}
   \begin{split}
&  \max\limits_{1\leq i\leq N-1}  E\big[(\tilde{Z}_i(t_{i}) - Z_{t_{i}})^2\big] =  \max\limits_{1\leq i\leq N-1}  E\big[(\E[\tilde{Z}_i(t_{i}) - Z_{t_{i}}])^2\big] \\
\leq&   \max\limits_{1\leq i\leq N-1}  3 E\Big[(\E[\tilde{Z}_i(t_{i}) - \tilde{Z}_i(t_{i+1}-0)])^2 + (\E[e_z^{i+1}])^2   +(\E[Z_{t_{i+1}} - Z_{t_{i}}])^2\Big].
   \end{split}
\end{equation*}
Then the second part of the theorem follows from \Cref{th1}, \Cref{th21}, and \cref{Analysis:Z}.
$\Box$

\vspace{0.1em}

\section{A first order splitting up  scheme}
In this section, we discretize the BSDE \eqref{BSDE_interval} and SDE \eqref{SDE_interval} in the splitting up system to obtain a first order splitting up numerical scheme.

First we define an approximation for $X$  by  $X^0=X_0$, and
\begin{equation}\label{86}
\begin{split}
  X^{i+1}=X^{i}+\dwi, \quad i=1,\cdots,N.
  \end{split}
\end{equation}
It is easy to see that for  any $t\in[t_i,\tp)$ and $i=1\cdots,N$, there exists a positive constant $C$, independent of $X_0$ and $\Delta t$, such that
\begin{equation}
  E\bigg[|X^{i+1}-X_t|^2+|X_t-X^{i}|^2\bigg]\leq C\Delta t.
\end{equation}
To obtain a first order splitting up scheme, we use the explicit Euler scheme to approximate  the BSDE \cref{BSDE_interval} and the  Milstein scheme to approximate  the SDE \cref{SDE_interval}. The resulting  algorithm is given as follows.
\begin{equation}\label{Numerical_Scheme}
\begin{aligned}
&H^{i+1}=Y^{i+1}+\Delta t f(t_{i+1}, X^{i+1}, Y^{i+1}, Z^{i+1}),&(a)\\
    &\tilde{Y}^{i} =\E[H^{i+1}],&(b)\\
  &Z^i = \f{1}{\Delta t} \E[H^{i+1} \Delta W_i ],&(c)\\
  &Y^i = \tilde{Y}^{i} + \E[G^{i+1}(\tilde{Y}^{i})],&(d)\\
  \end{aligned}
\end{equation}
where for any $\mathcal{F}_{t_{i+1},T}^B$ measurable random variable, $G^{i+1}(\xi)$ is defined by
\begin{equation}\label{Def:G}
 G^{i+1}(\xi) :=  g(t_{i+1}, X^{i+1}, \xi) \Delta B_i + (g_y \cdot g) (t_{i+1}, X^{i+1}, \xi) \f{1}{2} (\Delta B_i^2 - \Delta t).
 \end{equation}
Note that in \cref{Numerical_Scheme}, $\tilde{Y}^{i}$ is an approximation for $\tilde{Y}_i(t_i)$, $Y^i$ is an approximation for $\E[Y_i(t_i)]$,  and $Z^i$ is an approximation for $\tilde{Z}_i(t_i)$. Apparently, $Y^i$ is also an approximation for $Y_{t_i}$ and $Z^i$ is also an approximation for $Z_{t_i}$.

In order to show that $Y^i$ is a first order numerical approximation for $Y_t$ and $Z^i$ is a half order numerical approximation for $Z_t$, we first show that $Y^i$ is a first order approximation for $\E[Y_i(t_i)]$ and $Z^i$ is a half order approximation for $\tilde{Z}_i(t_i)$. Then, the first order convergence rate and half order convergence rate of our numerical schemes in approximating $Y_{t}$ and $Z_{t}$, respectively, is  arrived  as a direct consequence of   \Cref{main}.

\vspace{0.1em}

\begin{theorem}\label{discrete-split}
Assume that assumptions \textbf{(H1)-(H3)} hold. Then there exists a positive constant $C$, independent of $\Delta t$ and $X_0$,  such that
   \begin{equation}
   \max\limits_{1\leq i\leq N} E\big[(Y^i- \E[Y_i(t_i)])^2 + \Delta t (Z^i-\tilde{Z}_i(t_i))^2 \big] \leq C \Delta t^2.
   \end{equation}
\end{theorem}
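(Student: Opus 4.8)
The plan is to run a backward induction in $i$ that mirrors the proof of \Cref{main}: compare the three numerical updates in \cref{Numerical_Scheme} against their exact splitting-up counterparts one step at a time, assemble a coupled recursion for the squared errors, and close it with the discrete Gronwall inequality. Two structural simplifications are available at the grid points. Since $X^0=X_0$ and $X^{i+1}=X^i+\dwi$, we have $X^i=X_0+W_{t_i}=X_{t_i}$ almost surely, so there is no spatial error to carry at the nodes; and with the terminal initialization $Y^N=\Phi(X^N)=\Phi(X_T)=Y_N(T)$ the base case vanishes. Writing $\bar e_y^i:=Y^i-\E[Y_i(t_i)]$ and $\bar e_z^i:=Z^i-\z(t_i)$, the goal is the coupled bound $E[(\bar e_y^i)^2]=O(\Delta t^2)$ and $E[(\bar e_z^i)^2]=O(\Delta t)$. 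A useful observation at the outset is that $\ya(t_i)$, being $\mathcal{F}^W_{0,t_i}\vee\mathcal{F}^B_{t_{i+1},T}$ measurable, is in fact $\mathcal{G}_{0,t_i}$ measurable, so $\E[\ya(t_i)]=\ya(t_i)$; this reconciles the fact that $\tilde Y^i$ approximates $\ya(t_i)$ while $Y^i$ approximates $\E[Y_i(t_i)]$.

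First I would derive exact representations of the splitting-up quantities at $t_i$. Evaluating \cref{BSDE_interval} at $t=t_i$ and taking $\E[\cdot]$ gives $\ya(t_i)=\E[\yatp+\ii\ffs\ds]$; multiplying the same identity by $\dwi$, taking $\E[\cdot]$, and using the conditional It\^o isometry together with the regularity of $\z$ from \Cref{th1}(4) and \Cref{th21} yields $\Delta t\,\z(t_i)=\E[(\yatp+\ii\ffs\ds)\dwi]+O(\Delta t^2)$, exactly as in the derivation of \cref{difference_W}. Evaluating \cref{SDE_interval} at $t=t_i$ gives $\E[Y_i(t_i)]=\ya(t_i)+\E[\ii\ggs\dbs]$. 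Comparing these with updates $(b)$--$(d)$ isolates three one-step errors: (i) the right-rectangle quadrature replacing $\ii\ffs\ds$ by $\Delta t\,f(t_{i+1},X^{i+1},Y^{i+1},Z^{i+1})$, which is $O(\Delta t^2)$ in mean square by \Cref{th1} and \Cref{th21}; (ii) the analogous $\dwi$-weighted error governing $Z^i$, where the prefactor $\f{1}{\Delta t}$ means the representation/quadrature errors must be shown to be $O(\Delta t^2)$, leaving an $O(\Delta t^{1/2})$ error in $Z$ consistent with half order; and (iii) the Milstein approximation of the backward integral $\ii\ggs\dbs$ by $\E[G^{i+1}(\tilde Y^i)]$.

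Step (iii) is the analytic core, and I would treat it with the backward It\^o--Taylor expansion already assembled in \cref{g1}--\cref{dg}. Expanding $g(\Theta_s)$ about $t_{i+1}$ shows that the exact integral equals $g(t_{i+1},X_{t_{i+1}},\yatp)\dbi+(g_y\cdot g)(t_{i+1},X_{t_{i+1}},\yatp)\f{1}{2}(\dbi^2-\Delta t)$ plus an $L^2$ remainder of size $O(\Delta t^{3/2})$, so that the correction term in \cref{Def:G} is precisely what upgrades the Euler--Maruyama half order to first order. Replacing the exact arguments $(X_{t_{i+1}},\yatp)$ by the numerical $(X^{i+1},\tilde Y^i)$ then costs only Lipschitz-controlled increments by \textbf{(H3)}; here one uses that $\dbi$ is $\mathcal{G}_{0,t_i}$ measurable so that the conditional expectation $\E[G^{i+1}(\tilde Y^i)]$ retains the $\dbi$ weights, while the \Cref{th21}-type bound $|\ya(t_i)-\yatp|=O(\Delta t^{1/2})$ keeps these increments at the right order.

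With the three local errors in hand, I would assemble the coupled recursion for $E[(\bar e_y^i)^2]+\Delta t\,E[(\bar e_z^i)^2]$ in the spirit of \cref{Estimation:Y}--\cref{Estimate:Y+Z}: the error $\bar e_y^{i+1}$ propagates through both $f$ and $g$ with a benign $(1+C\Delta t)$ factor, while $\bar e_z^{i+1}$ enters the $\tilde Y^i$ update only through the $\Delta t\,f$ term (hence with a harmless $\Delta t^2(\bar e_z^{i+1})^2=O(\Delta t^3)$ contribution) but enters the $Z^i$ update through the $\dwi$-weighted $f$ term. I expect the main obstacle to be exactly this $Y$--$Z$ feedback: as in \cref{Estimation:Z} one must split the $\E[\,\cdot\,\dwi]$ terms using the conditional variance identity $(\E[e\,\dwi])^2\le\Delta t(\E[e^2]-(\E[e])^2)$ and choose the Young parameters $\epsilon_0,\epsilon_1,\epsilon_2$ small enough that the net coefficient $C_z$ of $\Delta t\,E[(\bar e_z)^2]$ stays positive, so that the $Z$-errors can be absorbed on the left and the recursion collapses to $E[(\bar e_y^i)^2]\le(1+C\Delta t)E[(\bar e_y^{i+1})^2]+O(\Delta t^3)$. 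Discrete Gronwall then gives $\max_i E[(\bar e_y^i)^2]=O(\Delta t^2)$, and back-substitution into the coupled inequality yields $\max_i\Delta t\,E[(\bar e_z^i)^2]=O(\Delta t^2)$, which is the assertion. A secondary nuisance I would flag is the index/filtration bookkeeping: the single numerical $Z^{i+1}$ must serve as the $z$-argument of $f$ on $[t_i,t_{i+1})$, whereas the exact right-endpoint value there is $\z(t_{i+1}-0)$ rather than the quantity $Z^{i+1}$ approximates; both lie within $O(\Delta t^{1/2})$ of $Z_{t_{i+1}}$ by \Cref{th1}, so the mismatch is absorbable, but it must be made explicit.
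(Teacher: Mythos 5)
Your proposal is correct and follows essentially the same route as the paper's proof: a one-step comparison of scheme \cref{Numerical_Scheme}(b)--(d) with the splitting solution at $t_i$, an It\^o--Taylor (Milstein) remainder for the backward $\dd\ola{B}$ integral with mean-square size $O(\Delta t^3)$, the conditional-variance identity $\big(\E[e\,\dwi]\big)^2\leq \Delta t\big(\E[e^2]-(\E[e])^2\big)$ for the $\dwi$-weighted $Z$ recursion, small Young parameters keeping the coefficient of $\Delta t\,E[(\hat{e}_z^{i+1})^2]$ positive, and the discrete Gronwall inequality on the coupled $Y$--$Z$ recursion, exactly as in \cref{square:Y-estimation}--\cref{square:Y+Z-estimation}. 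The one cosmetic slip is your claim that $\hat{e}_z^{i+1}$ enters the $\tilde{Y}^i$ update only through a ``harmless $O(\Delta t^3)$'' term: the cross term between $\E[\hat{e}_y^{i+1}]$ and the $\Delta t\,f$ increment in fact produces the $\epsilon_1\Delta t\,E[(\hat{e}_z^{i+1})^2]$ contribution in \cref{square:Y-estimation}, but your own absorption argument (choosing the Young parameters so the $Z$-coefficient stays positive) already handles precisely this, so the plan closes as in the paper.
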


\begin{proof}
Set $t = t_i$, take conditional expectation on both sides of \cref{SDE_interval}, and then subtract the result from \cref{Numerical_Scheme} (d) to get
\begin{equation}\label{SDE_difference}
Y^i - \E[Y_i(t_i)] = \tilde{Y}^{i} - \tilde{Y}_i (t_i) + \E[G^{i+1}(\tilde{Y}^{i}) - G^{i+1}(\tilde{Y}_i (t_i))] + R_{g}^i,
\end{equation}
where
\begin{equation*}
   \begin{split}
     R_{g}^i = & \int_{t_i}^{t_{i+1}} \E[g(\tilde{\Theta}_s)] \dbs -  \E[G^{i+1}(\tilde{Y}_i (t_i))]. \\
   \end{split}
\end{equation*}
It is easy to verify that $E[(R_{g}^i)^2]  = O(\Delta t^3 )$. As in  Section 3, denote
\begin{equation*}
  \hat{e}_y^i =Y^i - \E[Y_i(t_i)],~\text{and}~\hat{e}_z^i=Z^i-\tilde{Z}_i(t_i).
\end{equation*}
Squaring \cref{SDE_difference} and taking the expectation, we obtain
\begin{equation}\label{square:difference}
\begin{aligned}
E[( \hat{e}_y^i )^2] =& \ E\big[(\tilde{Y}^{i} - \tilde{Y}_i (t_i)  )^2 \big]  + E \big[ \Big(\E[G^{i+1}(\tilde{Y}^{i}) - G^{i+1}(\tilde{Y}_i (t_i))] + R_{g}^i \Big)^2 \big] \\
& \ + 2   E\Big[\big( \tilde{Y}^{i} - \tilde{Y}_i (t_i) \big) \Big(\E[G^{i+1}(\tilde{Y}^{i}) - G^{i+1}(\tilde{Y}_i (t_i))] + R_{g}^i \Big) \Big].
\end{aligned}
\end{equation}
The last term above is 0, which can be proved by an argument similar to \eqref{2}. Next we estimate  $\tilde{Y}^{i} - \tilde{Y}_i (t_i)$ and $\E[G^{i+1}(\tilde{Y}^{i}) - G^{i+1}(\tilde{Y}_i (t_i))] $ in \cref{square:difference}. By the definition of $\hat{G}$ in \cref{Def:G}, we have
\begin{equation}\label{estimation:G}
\begin{aligned}
& E\big[\big(G^{i+1}(\tilde{Y}^{i}) - G^{i+1}(\tilde{Y}_i (t_i)) \big)^2\big] \leq C_G \Delta t  E[(\tilde{Y}^{i} - \tilde{Y}_i (t_i))^2],
\end{aligned}
\end{equation}
where $C_G$ is a constant independent of $X_0$ and $\Delta t.$ Therefore, it suffices to estimate $E[(\tilde{Y}^{i} - \tilde{Y}_i (t_i))^2]$. To this end, we take the  conditional expectation $\E[\cdot]$ on both sides of  \cref{20} and subtract it from \cref{Numerical_Scheme} (b) to obtain
\begin{equation}\label{BSDE_difference}
\begin{aligned}
\tilde{Y}^{i} - \tilde{Y}_i (t_i) = & \E[\hat{e}_y^{i+1}] + \Delta t \E\Big[f(\Pi^{i+1})- f(\tilde{\Pi}_{\tp})\Big]+ R_{f}^i,
\end{aligned}
\end{equation}
where $ R_{f}^i =  \int_{t_i}^{t_{i+1}} \E\big[f(\tilde{\Pi}_{i+1}) - \ffs\big]\ds$ is the truncation error, and $R_f^i = O(\Delta t^2)$ (for notational simplicity, we denote $\Pi^{i+1}:=(t_{i+1}, X^{i+1}, Y^{i+1}, Z^{i+1})$ and $\tilde{\Pi}_{i+1}:=(\tp,X_{\tp},\yatp,\z(\tp-0))$).
Squaring  both sides of the above, and then taking the  expectation, we have
\begin{equation}\label{BSDE_difference-square}
\begin{aligned}
E\big[ \big( \tilde{Y}^{i} - \tilde{Y}_i (t_i) \big)^2 \big] =&  E\big[ \big( \E[\hat{e}_y^{i+1}] \big)^2 \big] + E\Big[ \Big( \Delta t \E\Big[f(\Pi^{i+1})- f(\tilde{\Pi}_{\tp})\Big]+ R_{f}^i \Big)^2\Big] \\
& + 2 E\Big[  \E[\hat{e}_y^{i+1}]  \Big( \Delta t \E\Big[f(\Pi^{i+1})- f(\tilde{\Pi}_{\tp})\Big]+ R_{f}^i \Big)\Big].
\end{aligned}
\end{equation}
Using similar arguments as \cref{14,18,29}, we obtain
\begin{equation}\label{square:Y-estimation}
\begin{aligned}
E[( \hat{e}_y^i)^2] \leq& \ E\big[ \big( \E[\hat{e}_y^{i+1} ] \big)^2 \big]  + \Delta t \hat{C}_y^1 E\big[(  \hat{e}_y^{i+1}  )^2 \big] \\
& \quad + \epsilon_1 \Delta t E\big[(  \hat{e}_z^{i+1}  )^2 \big] + \hat{C}_y^2 \Delta t^2 E\big[(  \hat{e}_z^{i+1}  )^2 \big] + O(\Delta t^3),
\end{aligned}
\end{equation}
where $\hat{C}_y^1$ and $\hat{C}_y^2$ are constants independent of $X_0$ and $\Delta t$, and $\epsilon_1$ is a constant to be specified later.

\vspace{0.25em}

To estimate  $\hat{e}_z^i$, we multiply $\Delta W_i$ on both sides of \cref{BSDE_interval}, take conditional expectation $\E[\cdot]$, and subtract it from  \cref{Numerical_Scheme} (c) to obtain
\begin{equation}\label{BSDE_dW_difference}
\begin{aligned}
\Delta t \big( \hat{e}_z^i \big) =& \E[\big(\hat{e}_y^{i+1} \big) \Delta W_i] + \Delta t \E\Big[ \Big( f(\Pi^{i+1})- f(\tilde{\Pi}_{\tp}) \Big) \Delta W_i \Big]+ R_{fW}^i,
\end{aligned}
\end{equation}
where
$$
\begin{aligned}
R_{fW}^i =&  \int_{t_i}^{t_{i+1}} \E\Big[\big( f(\tilde{\Pi}_{\tp}) - \ffs \big) \Delta W_i \Big]\ds \\
& + \int_{t_i}^{t_{i+1}} \E[\tilde{Z}_i(s) - \tilde{Z}_i(t_i) ]\ds.
\end{aligned}
$$
It follows from \Cref{th21} that $R_{fW}^i = O(\Delta t^2)$. Squaring  both sides of \cref{BSDE_dW_difference},  taking the expectation, and using similar analysis techniques as in the proof of \Cref{main}, we  derive that
\begin{equation}\label{square:Z-estimation}
\begin{aligned}
\f{\Delta t}{1 + \epsilon} E[( \hat{e}_z^i)^2] \leq& \ E\Big[ \E[\big(\hat{e}_y^{i+1} \big)^2] - (\E[\hat{e}_y^{i+1}])^2 \Big]  + \Delta t \hat{C}_z^1 E\big[(  \hat{e}_y^{i+1}  )^2 \big] \\
& \quad + \epsilon_2 \Delta t E\big[(  \hat{e}_z^{i+1}  )^2 \big] + \hat{C}_z^2 \Delta t^2 E\big[(  \hat{e}_z^{i+1}  )^2 \big]  + O(\Delta t^3),
\end{aligned}
\end{equation}
where $\hat{C}_z^1$, $\hat{C}_z^2$ are constants independent of $\Delta$ and $X_0$, and $\epsilon_2$ is a constant that will be determined later.

Finally, we add \cref{square:Y-estimation} and \cref{square:Z-estimation} to obtain
\begin{equation}\label{square:Y+Z-estimation}
\begin{aligned}
E[( \hat{e}_y^i)^2]  + & \f{\Delta t}{1 + \epsilon} E[( \hat{e}_z^i)^2] \leq \ E\big[ (\hat{e}_y^{i+1} )^2\big]  +  ( \epsilon_1 + \epsilon_2) \Delta t E\big[(  \hat{e}_z^{i+1}  )^2 \big]  \\
& + \Delta t ( \hat{C}_y^1+\hat{C}_z^1) E\big[(  \hat{e}_y^{i+1}  )^2 \big] + (\hat{C}_y^2+\hat{C}_z^2) \Delta t^2 E\big[(  \hat{e}_z^{i+1}  )^2 \big]  + O(\Delta t^3).
\end{aligned}
\end{equation}
Choosing  $\epsilon$, $\epsilon_1$ and $\epsilon_2$ sufficiently small  so that  $\epsilon_1 + \epsilon_2 < \f{1}{1 + \epsilon}$, and using the discrete Gronwall inequality, we obtain the desired result of the theorem.
\end{proof}

As a direct consequence of  \Cref{main} and \Cref{discrete-split}, we have  the following first order error estimate for our numerical scheme \eqref{Numerical_Scheme}.
\vspace{0.5em}

\begin{theorem}\label{discrete}
Assume that  assumptions \textbf{(H1)-(H3)} hold. Then there exists a positive constant $C$ independent of $\Delta t$, such that
   \begin{equation}
   \max\limits_{1\leq i\leq N} E\big[(Y^i- Y_{t_i})^2 + \Delta t (Z^i -Z_{t_i})^2 \big] \leq C \Delta t^2.
   \end{equation}
\end{theorem}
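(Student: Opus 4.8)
The plan is to obtain \Cref{discrete} by a straightforward triangle-inequality argument, since it is advertised as a direct consequence of \Cref{main} and \Cref{discrete-split}. The two cited theorems control complementary pieces of the total error: \Cref{discrete-split} measures how well the fully discrete scheme \eqref{Numerical_Scheme} tracks the exact solution of the split system (the quantities $\E[Y_i(t_i)]$ and $\tilde{Z}_i(t_i)$), whereas \Cref{main} measures how well the split system approximates the true BDSDE solution $(Y_{t_i},Z_{t_i})$. The idea is to insert these intermediate quantities as pivots and split each error into a ``discretization'' part and a ``splitting'' part.

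Concretely, I would first write the two decompositions
$$Y^i - Y_{t_i} = \big(Y^i - \E[Y_i(t_i)]\big) + \big(\E[Y_i(t_i)] - Y_{t_i}\big),$$
$$Z^i - Z_{t_i} = \big(Z^i - \tilde{Z}_i(t_i)\big) + \big(\tilde{Z}_i(t_i) - Z_{t_i}\big),$$
and apply the elementary inequality $(a+b)^2\leq 2(a^2+b^2)$ to each. For the $Y$-error both resulting terms are $O(\Delta t^2)$ in mean square: the first by \Cref{discrete-split} and the second by the first estimate of \Cref{main}. Taking expectations and then the maximum over $1\leq i\leq N$ gives $\max_i E[(Y^i - Y_{t_i})^2]\leq C\Delta t^2$. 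Note that the pivot $\E[Y_i(t_i)]$ is exactly the quantity approximated by $Y^i$ in \Cref{discrete-split} and exactly the quantity compared against $Y_{t_i}$ in \Cref{main}, so the two estimates splice together with no measurability mismatch.

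For the $Z$-error the only point requiring attention is the bookkeeping of the $\Delta t$ weight appearing in the statement. After multiplying by $\Delta t$ and using the same inequality, I would bound $\Delta t\, E[(Z^i - \tilde{Z}_i(t_i))^2]\leq C\Delta t^2$ by \Cref{discrete-split}, and $\Delta t\, E[(\tilde{Z}_i(t_i) - Z_{t_i})^2]\leq \Delta t\cdot C\Delta t = C\Delta t^2$ by the second estimate of \Cref{main}. Here it matters that the splitting error for $Z$ is only half order in the unweighted norm; it is precisely the factor $\Delta t$ in the theorem statement that upgrades this half-order contribution to a second-order one. Adding the $Y$ and $Z$ bounds and taking the maximum over $i$ yields the claimed estimate $\max_i E[(Y^i-Y_{t_i})^2 + \Delta t (Z^i-Z_{t_i})^2]\leq C\Delta t^2$.

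There is essentially no analytical obstacle at this stage: the two cited theorems already carry all of the hard work — the It\^o--Taylor expansions, the discrete Gronwall arguments, and the calibration of the small parameters $\epsilon_0,\epsilon_1,\epsilon_2$. The single substantive check is that the two error scalings are compatible, i.e.\ that the half-order $Z$ splitting error from \Cref{main} combines with the $\Delta t$-weighted discretization error from \Cref{discrete-split} to produce an overall $\Delta t^2$ bound, which the weighting in the statement is designed to ensure.
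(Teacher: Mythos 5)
Your proposal is correct and coincides with the paper's own treatment: the paper offers no separate proof of \Cref{discrete}, presenting it as a direct consequence of \Cref{main} and \Cref{discrete-split}, which is precisely the triangle-inequality splicing through the pivots $\E[Y_i(t_i)]$ and $\tilde{Z}_i(t_i)$ that you carry out. Your remark that the $\Delta t$ weight in the statement is what upgrades the half-order splitting error for $Z$ from \Cref{main} to the overall $O(\Delta t^2)$ bound is exactly the right (and only) bookkeeping point.
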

\vspace{0.1em}

\section{Numerical experiments}
In this section, we use three numerical experiments to validate our splitting up scheme and verify the error estimations.
In order to implement the numerical schemes \eqref{Numerical_Scheme}, we need to approximate the conditional expectation $E_{t_i}$ in
 \cref{Numerical_Scheme}. Since the conditional expectation $E_{t_i}$ is essentially an integral with the Gaussian kernel,  we use Gauss - Hermite quadrature formula as a numerical integral method to calculate conditional expectations (see \cite{bao2016first} for more details). To calculate the general expectation $E$, we use Monte Carlo method with $300$ samples and compute  the root mean square error in each example.

 \subsection*{Example 1}

In the first example, we consider the BDSDE
\begin{equation*}
  \dd Y_t=-f(t,Y_t,Z_t)\dd t+Z_t\dd W_t-g(t,Y_t)\dd \overleftarrow{B}_t,
\end{equation*}
where $f(t, Y_t, Z_t) = \f{Y_t}{2} - Z_t + \f{B_t - B_T}{8}$ and $g(t, Y_t) = \f{1}{4} \big(\cos(t+W_t)^2 + Y_t - ( \f{B_t - B_T}{8})^2  \big)$.The exact solution to the above equation is $Y_t = \sin(t+W_t) - \f{B_t}{4} + \f{B_T}{4}$ and $Z_t = \cos(t+W_t)$.  
\begin{table}[ht]\small
\leftmargin=6pc \caption{Example 1} \label{e1:t1}
\begin{center}
\begin{tabular}{|c|c|c|c|c|c|c|}
 \hline  Partition &   $E[\|\tilde{Y}^0 - Y_{0}\|_{L^2}]$  & $E[\|Y^0 - Y_{0}\|_{L^2}]$  & $E[\|Z^0 - Z_{0}\|_{L^2}]$   \\
\hline   $N = 2^3$ & $2.4000e-01$   & $ 2.3489e-01$    & $  9.8106e-02$  \\
\hline   $N = 2^4$  & $1.3788e-01 $  & $  1.2745e-01  $     & $ 4.2258e-02 $  \\
\hline   $N = 2^5$ & $  7.7604e-02 $  & $ 6.4257e-02 $     & $ 2.0880e-02$  \\
\hline   $N = 2^6$  & $ 4.3455e-02  $  & $ 3.1834e-02 $    & $  1.2762e-02 $  \\
\hline   $N = 2^7$  & $  2.7816e-02 $   & $   1.5770e-02 $    & $  6.0392e-03$  \\
\hline   $CR$  & $ 0.79  $   & $  0.98  $    & $  0.98$  \\
\hline
\end{tabular}\end{center}
\end{table}
To demonstrate the performance of our numerical schemes, we compute the root mean square errors (RMSEs) between our approximate solutions and the exact solution. Specifically, we calculate the  expectation of the $L^2$ norm errors $\|\tilde{Y}^0 - Y_{0}\|_{L^2}$, $\|Y^0 - Y_{0}\|_{L^2}$ and $\|Z^0 - Z_{0}\|_{L^2}$ at time $t= 0$ with $300$ Monte Carlo samples and discretize the equations with time step sizes $\Delta t = 2^{-3}, 2^{-4}, 2^{-5}, 2^{-6}, 2^{-7}$. The corresponding errors are presented in Table \ref{e1:t1}. Here $CR$ in the table stands for ``convergence rate''. We can see from the table that $Y^i$ indeed provides a first order numerical approximation for the solution $Y_t$, and $\tilde{Y}^{i}$ provides reasonably accurate approximation for $Y_t$. However, since the scheme for $\tilde{Y}^{i}$ does not include  the $d \ola{B}$ integral, it does not provide a first order approximation for the solution. On the other hand, we can see that our numerical solution $Z^i$ converges with first order in approximating $Z_t$  in this example although in our proof we only obtain half order convergence analysis for $Z^i$. Further investigation is needed to determine if this a super convergence for $Z_t$ on the nodal points.

\subsection*{Example 2}

In the second example, we consider the BDSDE
\begin{equation*}
  \dd Y_t=-f(t,Y_t,Z_t)\dd t+Z_t\dd W_t-g(t,Y_t)\dd \overleftarrow{B}_t,
\end{equation*}
with $f(t, Y_t, Z_t) = (Y_t - t - B_t)^2 + \big(\cos(W_t) \big)^2 - \f{1}{2}\sin(W_t)$, $g(t, Y_t) = (Y_t - t - B_t)^2 + \big(\cos(W_t) \big)^2$. The exact solution is given by $Y_t = \sin(W_t) + t + B_t $ and $Z_t = \cos(W_t)$.
\begin{table}[ht]\small
\leftmargin=6pc \caption{Example 2} \label{e2:t1}
\begin{center}
\begin{tabular}{|c|c|c|c|c|c|c|}
\hline  Partition &   $E[\|\tilde{Y}^0 - Y_{0}\|_{L^2}]$  & $E[\|Y^0 - Y_{0}\|_{L^2}]$  & $E[\|Z^0 - Z_{0}\|_{L^2}]$   \\
\hline   $N =2^3$ &  $  4.1305e-01 $ & $ 2.4342e-01 $   & $  1.3810e-01 $  \\
\hline   $N =2^4$ & $   2.9031e-01 $ & $ 1.4541e-01 $    & $   9.0722e-02 $  \\
\hline   $N =2^5$ & $  1.9627e-01  $ & $  7.3160e-02 $   & $ 5.5327e-02 $  \\
\hline   $N =2^6$  & $ 1.3704e-01 $  & $  3.4548e-02 $   & $   2.7033e-02  $  \\
\hline   $N =2^7$  & $  8.9505e-02$  & $1.5549e-02  $   & $ 1.3429e-02 $  \\
\hline   $CR$  & $  0.55$  & $ 1.00$   & $ 0.85 $  \\
\hline
\end{tabular}\end{center}
\end{table}
We can see that in this example, both $f$ and $g$ are nonlinear function for $Y_t$. Therefore, this example  demonstrates the performance of our schemes in solving nonlinear BDSDE systems.
As in the first example, we evaluate he RMSEs $E[\|\tilde{Y}^0 - Y_{0}\|_{L^2}]$, $E[\|Y^0 - Y_{0}\|_{L^2}]$ and $E[\|Z^0 - Z_{0}\|_{L^2}]$, at time $t= 0$. In Table \ref{e2:t1}, we can see that the convergence order for $\|Y^0 - Y_{0}\|_{L^2}$ is $1$ and the convergence order for $\|\tilde{Y}^0 - Y_{0}\|_{L^2}$ is roughly $0.55$,  For the numerical solution $Z^i$, we can see from the table that the convergence for $\|Z^0 - Z_{0}\|_{L^2}$ is $0.847$, which is less than $1$. From this example we can see that $Z^i$ does not always produce first order approximation for $Z_{t_i}$.

\subsection*{Example 3}


In the third example, we consider the BDSDE
\begin{equation*}
  \dd Y_t=-f(t,Y_t,Z_t)\dd t+Z_t\dd W_t-g(t,Y_t)\dd \overleftarrow{B}_t,
\end{equation*}
with $f(t, Y_t, Z_t) = - \f{1}{2} ( \sin(Y_t) )^2 - \f{1}{2}(\cos(t + W_t + \f{1}{2} B_t))^2 - \f{1}{2} (Z_t)^2$ and $g(t, Y_t) = - \f{1}{2} ( \sin(Y_t) )^2 - \f{1}{2}(\cos(t + W_t + \f{1}{2} B_t))^2$. The exact solution for the above equation is $Y_t = t + W_t + \f{1}{2} B_t$ and $Z_t = 1$.
\begin{table}[ht]\small
\leftmargin=6pc \caption{Example 3} \label{e3:t1}
\begin{center}
\begin{tabular}{|c|c|c|c|c|c|c|}
\hline  Partition &   $E[\|\tilde{Y}^0 - Y_{0}\|_{L^2}]$  & $E[\|Y^0 - Y_{0}\|_{L^2}]$  & $E[\|Z^0 - Z_{0}\|_{L^2}]$   \\
\hline   $N = 2^3$ &  $ 1.7759e-01 $ & $  9.2446e-03  $   & $  1.6664e-02 $  \\
\hline   $N = 2^4$ & $ 1.1907e-01  $ & $  4.3131e-03 $    & $ 8.1184e-03   $  \\
\hline   $N = 2^5$ & $ 9.0338e-02 $ & $  2.2815e-03  $   & $ 4.2914e-03$  \\
\hline   $N = 2^6$  & $  5.7300e-02 $  & $ 1.1198e-03 $   & $  2.1936e-03 $  \\
\hline   $N = 2^7$  & $ 4.6539e-02$  & $ 5.5550e-04$   & $ 1.0452e-03$  \\
\hline   $CR$  & $ 0.49 $  & $ 1.01  $   & $ 0.99 $  \\
\hline
\end{tabular}\end{center}
\end{table}
In the last  example, $f$ is a nonlinear functions for $Y_t$, $Z_t$, $g$ is a nonlinear function for $Y_t$, and $Y_t$ is in a trigonometric  function in both $f$ and $g$.  The purpose of this example to demonstrate the performance of our method in solving a  more general BDSDE system.  In Table \ref{e3:t1}, we present the RMSEs between our approximate solutions and the exact solution at time $t= 0$.  From this table, we can see that for this example $\tilde{Y}^0$ converges to $Y_0$ with half order, and $( \hat{Y}_0, \hat{Z}_0)$ converges to $(Y_0, Z_0)$with first order.

\nocite{*}
\bibliographystyle{plain}
\end{document}